\g@addto@macro\normalsize{%
	\setlength\abovedisplayskip{2pt}
	\setlength\belowdisplayskip{2pt}
	\setlength\abovedisplayshortskip{4pt}
	\setlength\belowdisplayshortskip{4pt}
}
\numberwithin{equation}{section}
\crefname{section}{Section}{Sections}
\crefname{subsection}{Subsection}{Subsections}
\crefname{subsection}{subsection}{subsections}
\crefname{condition}{Condition}{Conditions}
\crefname{hypothesis}{Hypothesis}{Conditions}
\crefname{assumption}{Assumption}{Assumptions}
\crefname{lemma}{Lemma}{Lemmas}
\crefname{claim}{Claim}{Claims}
\crefname{observation}{Observation}{Observations}
\crefname{example}{Example}{Examples}
\newtheorem{theorem}{Theorem}[section]
\newtheorem{lemma}[theorem]{Lemma}
\newtheorem{proposition}[theorem]{Proposition}
\newtheorem{definition}[theorem]{Definition}% Use {\rm ...}
\newtheorem{remark}[theorem]{Remark}        % Use {\rm ...}
\numberwithin{equation}{section}
\newcommand{\vo}{\vec{o}\@ifnextchar{^}{\,}{}}
\def\YYint#1#2#3{{\setbox0=\hbox{$#1{#2#3}{\iint}$}
    \vcenter{\hbox{$#2#3$}}\kern-.50\wd0}}
\def\XXint#1#2#3{{\setbox0=\hbox{$#1{#2#3}{\int}$}
    \vcenter{\hbox{$#2#3$}}\kern-.50\wd0}}
\def\namedlabel#1#2{\begingroup
   \def\@currentlabel{#2}%
   \label{#1}\endgroup
}
\newcommand{\rmh}[1]{\mathpalette{\raisem@th{#1}}}
\newcommand{\raisem@th}[3]{\hspace*{-1pt}\raisebox{#1}{$#2#3$}}
\newcommand{\descref}[2]{\hyperref[#1]{\textnormal{\textcolor{black}{(}\textcolor{blue}{\bf #2}\textcolor{black}{)}}}}
\newcommand{\dref}[2]{\hyperref[#1]{\textcolor{black}{(}\textcolor{blue}{\bf #2}\textcolor{black}{)}}}
\newcommand\RR{\mathbb{R}}
\newcommand{\al}{\alpha}
\newcommand{\ve}{\varepsilon}
\newcommand{\tht}{\theta}
\newcommand{\Om}{\Omega}
\DeclareMathOperator{\diam}{diam}
\DeclareMathOperator{\dist}{dist}
\DeclareMathOperator{\loc}{loc}
\newcommand{\iprod}[2]{\langle #1 \ ,  #2\rangle}
\newcommand{\lbr}[1][(]{\left#1}
\newcommand{\rbr}[1][)]{\right#1}
\newcommand{\txt}[1]{\qquad \text{#1} \qquad}
\newcounter{whitney}
\newcounter{ineqcounter}
\def\ps@pprintTitle{%
\let\@oddhead\@empty
\let\@evenhead\@empty
\def\@oddfoot{}%
\let\@evenfoot\@oddfoot}
\begin{document}

\begin{frontmatter}

\title{Borderline Lipschitz regularity for bounded minimizers of functionals with $(p,q)$-growth}

\author{Karthik Adimurthi\tnoteref{thanksfirstauthor}}
\ead{karthikaditi@gmail.com and kadimurthi@tifrbng.res.in}

\author{Vivek Tewary\tnoteref{thankssecondauthor}}
\ead{vivek2020@tifrbng.res.in and vivektewary@protonmail.com}

\tnotetext[thanksfirstauthor]{Supported by the Department of Atomic Energy,  Government of India, under
	project no.  12-R\&D-TFR-5.01-0520 and SERB grant SRG/2020/000081}
\tnotetext[thankssecondauthor]{Supported by the Department of Atomic Energy,  Government of India, under
	project no.  12-R\&D-TFR-5.01-0520}
\address{Tata Institute of Fundamental Research, Centre for Applicable Mathematics, Bangalore, Karnataka, 560065, India}

\begin{abstract}
We prove local Lipschitz regularity for bounded minimizers of functionals with nonstandard $p,q$-growth with the source term in the Lorentz space $L(N,1)$ under the restriction $q<p+1+p\,\min\left\{\frac 1N,\frac{2(p-1)}{Np-2p+2}\right\}$. This extends the recent work by Beck-Mingione to bounded minimizers under weaker hypothesis and is sharp for some special ranges of $p$, $q$ and $N$.
\end{abstract}

\begin{keyword}
 quasilinear equations, $p$-Laplace, Lipschitz regularity, nonstandard growth
 \MSC[2020]  35J62 \sep 35J92 \sep 35B65 
\end{keyword}

\end{frontmatter}
\begin{singlespace}
\tableofcontents
\end{singlespace}
\section{Introduction}\label{section0}

Let $\Om\subset\RR^N$ be an open set and consider the following local minimization problem: Find $U\in W^{1,p}_{\loc}(\Om)$ such that for every $\Om'\Subset\Om$, the following problem admits a minimizer:
\begin{align}\label{mainprob}
	\mathfrak{F}(U,\Om'):=\min_{v-U\in W^{1,p}_{0}(\Om')}\mathfrak{F}(v):=\min_{v-U\in W^{1,p}_{0}(\Om')}\int_{\Om'} F(\nabla v)-fv\,dx,
\end{align} where $f\in L(N,1)(\Om)$ and $F\in C^2(\mathbb{R}^N)$ satisfies the following growth and ellipticity conditions: there exists $m,M\in(0,\infty)$ such that for $1< p\leq q<\infty$, $z\in\RR^N$ and $\xi\in\RR^N$, the following is satisfied:
%\begin{subequations}
\begin{equation*}\begin{array}{rcl}
	m|z|^p\leq F(z)&\leq& M|z|^p+M|z|^q\\
	|DF(z)|&\leq& M|z|^{p-1}+M|z|^{q-1}\\
	m|z|^{p-2}|\xi|^2\leq \langle D^2F(z)\xi,\xi\rangle&\leq& M|z|^{p-2}|\xi|^2+M|z|^{q-2}|\xi|^2,
	\end{array}
\end{equation*}
%\end{subequations}

\begin{definition}A function $f \in L(N,1)(\Om)$ if the following holds:
	$$f\in L(N,1)(\Om)\coloneqq \left\{ g\in L^n(\Om) : \int_{0}^\infty |\{x\in\Om : |g(x)|>\lambda\, \}|^{1/N}d\lambda<\infty \right\}.$$
\end{definition}

The main theorem we prove in this paper is given next.
\begin{theorem}\label{maintheorem} Let $1< p\leq q<\infty$ with $N\geq 3$ and further assume that the following restrictions are in force:
\begin{align}\label{restriction}
	q<p+1+p\,\min\left\{\frac 1N, \frac{2(p-1)}{Np+2-2p}\right\}\txt{ and } f\in L(N,1)(\Om).
\end{align} 
	 Let $U\in W^{1,p}_{\loc}(\Om)\cap L^\infty_{\loc}(\Om)$ be a bounded local minimizer of $\mathfrak{F}$ as in~\cref{mainprob}, then $\nabla U\in L^\infty_{\loc}(\Om)$ and for all $\alpha>0$ satisfying 
	 \begin{align*}
		\frac{2(q-1)}{p}\leq \alpha <2+\min\left\{\frac{2}{N},\frac{4(p-1)}{Np+2-2p}\right\},
	\end{align*} we have the following estimate
	\begin{equation*}
		||\nabla U||^p_{L^\infty(B_{R/2})}\apprle  \left\{\frac{|B|}{|B_R|}\left(\fint_{B} F(\nabla U) \,dx+||f||^{\frac{p}{p-1}}_{L^N(B)}\right)\right\}^{\frac{2}{2+2N-\alpha N}}
			+1+ ||f||_{L(N,1)(B_R)}^{\frac{2p\alpha}{p(2\alpha-\alpha N+2N)-2\alpha}},
		\end{equation*} for some ball $B$ such that $B_R\Subset B\Subset \Omega$ and for some constant $C$ depending on $N$, $p$, $||U||_{L^\infty(2B)},||f||_{L^N(B)}$, $M$ and $m$
\end{theorem}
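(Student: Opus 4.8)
The plan is a regularization argument followed by a quantitative Caccioppoli-and-De-Giorgi estimate for the gradient, carried out with constants independent of the regularization. Fix $B_R\Subset\Om$ and choose a slightly larger ball $B$ with $B_R\Subset B\Subset\Om$. I would mollify the datum, $f_j\to f$ in $L(N,1)(B)$, and replace $F$ by smooth, uniformly convex integrands $F_j$ that are non-degenerate at the origin and retain the $(p,q)$-growth and ellipticity bounds with constants uniform in $j$ (for instance $F_j=F*\vp_j+\mu_j(1+|z|^2)^{p/2}$ with $\mu_j\downarrow 0$). Let $u_j$ minimize $v\mapsto\int_B F_j(\nabla v)-f_j v$ among $v-U\in W^{1,p}_0(B)$. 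Standard regularity for non-degenerate controlled-growth functionals gives $u_j\in W^{2,2}_{\loc}(B)\cap W^{1,\infty}_{\loc}(B)$, so the differentiated equation below is legitimate; a truncation comparison, using $U\in L^\infty$ and $f\in L^N$, bounds $\|u_j\|_{L^\infty(B)}$ uniformly in $j$ by $\|U\|_{L^\infty(2B)}$ and $\|f\|_{L^N(B)}$. The target is a bound on $\|\nabla u_j\|_{L^\infty(B_{R/2})}$ independent of $j$; lower semicontinuity together with $u_j\to U$ with convergence of energies (no Lavrentiev gap --- this is exactly where boundedness of $U$ and the restriction on $q$ enter) then transfers the estimate to $U$.

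For the core step I would differentiate the Euler--Lagrange equation $\int_B\langle DF_j(\nabla u_j),\nabla\vp\rangle=\int_B f_j\vp$ in a direction $x_s$ to obtain, with $A_j:=D^2F_j(\nabla u_j)$, the equation $\int_B\langle A_j\nabla\partial_s u_j,\nabla\psi\rangle=-\int_B f_j\,\partial_s\psi$, and test it with $\psi=\eta^2\,\partial_s u_j\,G(V_j)$, summed over $s$, where $\eta$ is a cut-off, $V_j:=(1+|\nabla u_j|^2)^{1/2}$, and $G$ is increasing of power type near a truncation level. Using the lower ellipticity bound $mV_j^{p-2}|\xi|^2$ on the left and the upper bound $MV_j^{q-2}$ on the right, and --- crucially --- handling the source without integrating by parts on $f_j$, one is left with a term $\int\eta^2|f_j|\,|\nabla^2 u_j|\,G(V_j)$ which after Young's inequality is reabsorbed into the good term $\int\eta^2 V_j^{p-2}|\nabla^2 u_j|^2 G(V_j)$, producing residual source contributions of the type $\int\eta^2|f_j|^2 V_j^{2-p}G(V_j)$ and $\int|\nabla\eta|^2|f_j|\,V_j\,G(V_j)$. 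With $G$ of the form $V_j^{\alpha-2}$ near the level this yields a Caccioppoli inequality on super-level sets of $V_j$ whose left-hand side controls $\int\eta^2|\nabla(V_j^{(p+\alpha-2)/2})|^2$ on $\{V_j>k\}$; the reabsorption of the "bad" $V_j^{q}G(V_j)$ terms against the $p$-level good terms is precisely what forces $\alpha\ge\frac{2(q-1)}{p}$.

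From this inequality I would first run a Moser/Gehring-type bootstrap to upgrade the a priori energy bound $\fint F(\nabla u_j)\sim\fint V_j^{p}$ to a quantitative control of $\fint_{B'}V_j^{\alpha}$, which is possible exactly when $\alpha$ lies below the dimensional threshold $2+\min\{2/N,4(p-1)/(Np+2-2p)\}$ --- equivalently under the stated restriction on $q$. With $V_j\in L^{\alpha}_{\loc}$ in hand, I would perform a De Giorgi iteration over an increasing sequence of truncation levels converging to a threshold $k_*$: combining the level-set Caccioppoli inequality with the Sobolev inequality gives a nonlinear recursion for the excess integrals, whose purely geometric part carries the exponent $\frac{2}{2+2N-\alpha N}$, while the source term is estimated through the layer-cake identity $\|f_j\|_{L(N,1)(B_R)}=\int_0^\infty|\{x\in B_R:|f_j(x)|>\lambda\}|^{1/N}\,d\lambda$ --- which is what makes the borderline scale $L(N,1)$ (rather than $L^N\!\log L$ or worse) sufficient and produces the second exponent $\frac{2p\alpha}{p(2\alpha-\alpha N+2N)-2\alpha}$. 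Taking $k_*$ to be the larger of the two contributions closes the iteration and gives the sup-bound for $V_j$, hence, after $j\to\infty$, the estimate for $\nabla U$.

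The main obstacle is reconciling the $(p,q)$-gap with the non-differentiable datum. Because $f$ cannot be integrated by parts, the source unavoidably couples to $\nabla^2 u_j$ in the Caccioppoli inequality, and its Young-reabsorption competes with the $V_j^{q}$-terms coming from the upper ellipticity bound; only the precise quantitative relation $q<p+1+p\min\{1/N,2(p-1)/(Np+2-2p)\}$, once the De Giorgi scaling is accounted for, keeps both reabsorptions simultaneously feasible and, at the same time, guarantees that the energy passes to the limit without a Lavrentiev gap. Carrying all constants uniformly through the regularization and verifying this energy convergence are the remaining technical points, and in each of them the boundedness of $U$ is what makes the argument go through.
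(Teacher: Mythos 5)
Your skeleton (regularize with a uniformly non-degenerate $(p,q)$-integrand, derive a level-set Caccioppoli inequality by differentiating the Euler--Lagrange equation, De Giorgi-iterate, handle $f$ via the potential $\mathbf{P}_1^f$ and $L(N,1)$, pass to the limit) matches the paper's overall architecture. However, you are missing the single idea that makes the improved restriction $q<p+1+p\min\{1/N,\,2(p-1)/(Np+2-2p)\}$ possible, and you misattribute the role of $\|U\|_{L^\infty}$.

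Concretely: after testing the differentiated equation with $\psi = u_{x_j}(v-k)_+\eta^4$, the troublesome term coming from the upper ellipticity bound is of the form
\begin{equation*}
\int_B |\nabla u|^2\,(\ve^2+|\nabla u|^2)^{\frac{q-p-2}{2}}\,(v-k)_+^2\,\eta^2\,dx \;\leq\; \int_B |\nabla u|^2\,(\ve^2+|\nabla u|^2)^{\frac{q-2}{2}}\,(v-k)_+\,\eta^2\,dx .
\end{equation*}
If, as in your plan, you simply reabsorb this by Young's inequality against the coercive term $\int g_1|\nabla^2 u|^2(v-k)_+\eta^4$, you reproduce the Beck--Mingione threshold $q/p<1+\min\{2/N,4(p-1)/(p(N-2))\}$, not the improved one. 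The paper instead writes $|\nabla u|^2 = \langle\nabla u,\nabla u\rangle$, integrates by parts so that one factor $\nabla u$ becomes $u$, and bounds $|u|\leq\|u\|_{L^\infty}$ (see \cref{est4} of \cref{caccioppoli3}); this effectively removes one power of $|\nabla u|$ from the bad term before Young's inequality is applied, and it is exactly this step --- possible only because $u$ is bounded uniformly in the regularization, via the maximum principle --- that shifts the threshold to $q<p+1+\dots$. In your write-up, $\|U\|_{L^\infty}$ is only invoked for the Lavrentiev/energy-convergence argument at the end, which will not produce any improvement of the $(p,q)$-range; the paper uses the boundedness twice, but the decisive use is inside the Caccioppoli inequality.

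Two secondary discrepancies. First, the upper constraint $\alpha<2+\min\{2/N,\,4(p-1)/(Np+2-2p)\}$ does not arise from a Moser/Gehring bootstrap from $\fint V^p$ to $\fint V^\alpha$ (the paper performs no such bootstrap); it arises in the interpolation step $\fint v^\alpha \leq \|v\|_{L^\infty}^{\alpha-1}\fint v$ combined with Young's inequality to reabsorb $\|v\|_{L^\infty(B_t)}$ and close via the hole-filling iteration \cref{iterlemma}; unraveling the exponents there yields precisely $\alpha<2+2/N$ and $\alpha<2Np/(Np+2-2p)$, the latter from the requirement $\frac{1}{p\left(1-\frac N2 +\frac{N}{\alpha}\right)}<1$ coming from the $L\cdot\mathbf{P}_1^f$ term. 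Second, the qualitative regularity $u_j\in W^{1,\infty}\cap W^{2,2}$ for a non-degenerate $(p,q)$-functional with $q$ possibly much larger than $p$ is not ``standard'': the paper obtains $W^{1,\infty}$ via the Bousquet--Brasco theorem under the bounded slope condition (the mollified boundary datum $U_n$ is $C^2$, hence satisfies it on the uniformly convex ball), and only then difference-quotients to $W^{2,2}$. Without this justification your regularized minimizers need not have enough regularity to test the differentiated equation.
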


\subsection{Comparision to Previous Results}
 This proof uses the techniques in Beck-Mingione~\cite{beckLipschitzBoundsNonuniform2020}, where the same theorem was proved for unbounded minimizers under the restriction
 \begin{align}\label{prevrestriction}
	\frac{q}{p}<1+\min\left\{\frac{2}{N},\frac{4(p-1)}{p(N-2)}\right\}\txt{ and } f\in L(N,1)(\Om),\,N\geq 3.
\end{align}
The bound~\cref{prevrestriction} is an extension of the classical results of Marcellini~\cite{marcelliniRegularityMinimizersIntegrals1989,marcelliniRegularityExistenceSolutions1991}, who has made some of the first contributions in the regularity theory of problems of nonstandard growth in the Western world. There have been parallel contributions from the Soviet school~\cite{kolodiuiBoundednessGeneralizedSolutions1971,kolodiuiEstimateMaximumModulus1974,uraltsevaBoundednessGradientsGeneralized1983}. In a very nice recent paper, P.Bella and M.Sch\"affner \cite{bellaLipschitzBounds2022} improved the restriction to
\begin{align}\label{bellaschaffner}
	\frac{q}{p}<1+\min\left\{\frac{2}{N-1},\frac{4(p-1)}{p(N-3)}\right\}, \quad \mbox{ for } N\geq 3 \quad \text{and} \quad p >1,
\end{align} by employing a specialized test function that enables them to use Sobolev embedding on the sphere. 
There is a large body of work dealing with problems of $(p,q)$-growth as well as other nonstandard growth problems, for which we refer to the detailed surveys in \cite{marcelliniRegularityGeneralGrowth2020,mingioneRecentDevelopmentsProblems2021}.

It is well known that Lipschitz continuity and even boundedness for \cref{mainprob} fail when $p$ and $q$ are far apart as evidenced by the following example of Hong \cite{hongRemarksMinimizersVariational1992}, which is a variation on the famous counterexample of Giaquinta \cite{giaquintaGrowthConditionsRegularity1987}:
\begin{align*}%\label{giaquinta}
	\int_{\Om} |\nabla u|^2+|u_{x_n}|^4\,dx,
\end{align*} which satisfies the hypothesis for $p=2$, $q=4$ and $f\equiv 0$ and admits an unbounded minimizer if $N\geq 6$ (more examples of unbounded minimizers of \cref{mainprob} may be found in \cite{marcelliniExempleSolutionDiscontinue1987}). It was shown in \cite[Section~6]{marcelliniRegularityExistenceSolutions1991} that if $q>\frac{(N-1)p}{N-1-p}$, then one cannot expect boundedness and only recently, this restriction was found to be sharp in \cite{hirschGrowthConditionsRegularity2020}, where it is proved that the minimizer is bounded provided
\begin{align}\label{hirschschaffner}
	\frac{1}{p}-\frac{1}{q}\leq \frac{1}{N-1}.
\end{align}

The aim of this article is to employ the technique of integration by parts which is already used by DiBenedetto~\cite{dibenedettoRegularitySolutionsNonlinear1984} and has been used to great effect in the works of Bildhauer and Fuchs~\cite{bildhauerInteriorRegularityFree2004}. Recently, the same technique was used very effectively by Bousquet and Brasco~\cite{bousquetLipschitzRegularityOrthotropic2020} to prove Lipschitz regularity for bounded minimizers of the anisotropic functional with nonstandard growth conditions without imposing any restrictions on the exponents.

It is easy to see that there is a  gap between the restrictions in \cref{hirschschaffner} and \cref{bellaschaffner} and in this context, the authors in \cite{bellaRegularityMinimizersScalar2020,mingioneRecentDevelopmentsProblems2021} asked if one could obtain a Sobolev-type restriction (as in \cref{hirschschaffner}) in order for the minimizer to be  Lipschitz regular. \emph{In this regard, we improve the restriction in \cref{bellaschaffner} in some special ranges of $p,q$ and $N$ and also partially provide an answer to the question from \cite{bellaRegularityMinimizersScalar2020,mingioneRecentDevelopmentsProblems2021} by obtaining a Sobolev type restriction when $N>\frac 12\left(p^2 + \sqrt{p^4 + 6 p^2 + 4 p + 1}+ 1\right)$ and $p\geq 2$.} 

We make the following observations regarding the sharpness of the results. Since the thresholds of sharpness are sometimes cumbersome to state, we only point them out in special ranges:

\begin{enumerate}[(i)]
    \item For bounded minimizers, we require $q < p+1+p\,\min\left\{\frac 1N, \frac{2(p-1)}{Np+2-2p}\right\}$  and $f\in L(N,1)(\Om)$, see \cref{maintheorem}. For $p > \frac{2(N+1)}{(N+2)}$, and therefore, in particular, for $p\geq 2$, the restriction for bounded minimizers simplifies to \[q < p+1+\frac pN.\] On the other hand, for $p<\frac{2(N+1)}{(N+2)}$, the restriction for bounded minimizers simplifies to \[q < p+1+\frac{2p(p-1)}{Np+2-2p}.\]
    \item Combining the restriction $q < p+1+p\,\min\left\{\frac 1N, \frac{2(p-1)}{Np+2-2p}\right\}$ with the optimal restriction for boundedness from \cref{hirschschaffner}, we see that Lipschitz regularity for minimizers holds provided  $$\frac{q}{p} < 1 + \min\left\{\frac{p}{N-1-p},\frac 1N+\frac{1}{p},\frac{2(p-1)}{Np+2-2p} \right\},$$ and $f\in L(N,1)(\Om)$. 
    \item In the case $2\leq p\leq \frac{N(N-1)}{N+1}$, we see that $\frac{1}{p}+\frac{1}{N} \geq \frac{2}{N-1}$, which suggests that \cref{maintheorem} improves the restriction given in \cref{bellaschaffner} in this range. But it must be noted that our result additionally requires that the solutions are bounded which also requires the restriction \cref{hirschschaffner} to be satisfied.
    \item {\underline{\bfseries Case: $p\geq \frac{2(N+1)}{(N+2)}$:}} Since we require bounded solutions, we see that for minimizers, Lipschitz regularity would then require $\frac{q}{p} < 1+\min\left\{\frac{p}{N-1-p},\frac{1}{p}+\frac 1N \right\}$. In particular, if $N>\frac 12\left(p^2 + \sqrt{p^4 + 6 p^2 + 4 p + 1}+ 1\right)$ and $p\geq \frac{2(N+1)}{(N+2)}$, then $\frac{1}{p} + \frac 1N> \frac{p}{N-1-p}$  and thus Lipschitz regularity holds for any minimizer as they are automatically bounded. In particular, due to the sharpness of the condition \cref{hirschschaffner}, we automatically obtain sharpness of the Lipschitz regularity in this range.
    \item {\underline{\bfseries Case: $p < \frac{2(N+1)}{(N+2)}$:}} Once again, since we require bounded solutions, we see that for minimizers, Lipschitz regularity would then require $\frac{q}{p} < 1+\min\left\{\frac{p}{N-1-p}, \frac 1p + \frac{2(p-1)}{Np+2-2p} \right\}$. In particular, if $\frac{1}{p} + \frac{2(p-1)}{Np+2-2p} > \frac{p}{N-1-p}$  then Lipschitz regularity holds for any minimizer as they are automatically bounded. In particular, due to the sharpness of the condition \cref{hirschschaffner}, we automatically obtain sharpness of the Lipschitz regularity in this range. However, in this case, an explicit condition on $N$ looks unwieldy and is omitted.
    \item Our theorem improves the previous restriction for bounded minimizers of \cref{mainprob} which was found to be $q<p+1$ for $f\equiv 0$ in \cite{choeInteriorBehaviourMinimizers1992,cupiniExistenceRegularityElliptic2014}. 
\end{enumerate}

Let us now briefly describe the method of proof: In \cref{section1}, we start with the notations and auxiliary results that will be used in the course of the paper. In \cref{section2}, we describe a regularization procedure noting that the regularized solution belongs to $W^{1,\infty}\cap W^{2,2}$. In \cref{section3}, we obtain an energy estimate to be used for a De Giorgi-type iteration. In \cref{section4}, we apply the integration by parts technique and make use of restrictions on $q$ to get an improved Caccioppoli inequality. At this point, in \cref{section5}, we apply the De Giorgi iteration to obtain a boundedness estimate for $\nabla U$. This is followed followed by an interpolation estimate which results in the quoted restriction~\cref{restriction} (see \cref{section6}). Finally, we pass to the limit in the regularization parameter in \cref{section7}.

\section{Notations and Preliminaries}\label{section1}
\subsection{Notations}
We begin by collecting the standard notation that will be used throughout the paper.
\begin{itemize}
	\item We shall denote $N$ to be the space dimension. A point in $\mathbb{R}^{N}$ will be denoted by $x$. 
	\item Let $\Omega$ be a domain in $\mathbb{R}^N$ of boundary $\partial \Omega$.
	\item The notation $a \lesssim b$ is shorthand for $a\leq C b$ where $C$ is a constant independent of the regularization parameters $\sigma$ and $\ve$ and depends only on the data. 
\end{itemize}
\subsection{Preliminaries for Regularization}
We list some of the preliminaries that are required in the subsequent sections. Bousquet and Brasco~\cite{bousquetGlobalLipschitzContinuity2016} have proved a general theorem on the local Lipschitz regularity of minimizers to convex minimization problems posed on convex domains with boundary values satisfying the {\itshape{bounded slope condition}}. It is important to note that such a theorem does not require any growth condition on the functional. Let us recall the following definitions.

\begin{definition}
	A bounded, open set $B\subset\mathbb{R}^N$ is said to be uniformly convex if there exists $\nu>0$ such that for every boundary point $x_0\in\partial B$ there exists a hyperplane $H_{x_0}$ passing through that point satisfying
	\begin{align*}
		\dist(y,H_{x_0})\geq \nu |y-x_0|^2,\mbox{ for any $y\in\partial B$.}
	\end{align*}
\end{definition}

\begin{definition}[Bounded Slope Condition]
	Let $K$ be a positive real number and $B$ an open bounded convex subset of $\mathbb{R}^N$. We say that a function $\phi:\partial B\to\mathbb{R}$ satisfies the bounded slope condition of rank $K$ if for any $x_0\in\partial B$ there exists vectors $l_{x_0}^-$ and $l_{x_0}^+$ satisfying $||l_{x_0}^-||\leq K, ||l_{x_0}^+||\leq K$ such that
	\begin{align*}
		l_{x_0}^-\cdot(x-x_0)\leq \phi(x)-\phi(x_0)\leq l_{x_0}^+\cdot(x-x_0),\mbox{ for any $x\in\partial B$.}
	\end{align*}
\end{definition}

The following proposition gives a sufficient condition for a function to satisfy the bounded slope condition, the proof of which can be found in \cite[p. 234, Theorem 1.2]{mirandaTeoremaDiEsistenza1965}.

\begin{proposition}\label{characterbsc}
	Let $B\subset\mathbb{R}^N$ be a uniformly convex domain. Then, any function $\phi\in C^2(\mathbb{R}^N)$ satisfies the bounded slope condition on $\partial B$.
\end{proposition}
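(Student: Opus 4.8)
The plan is to read the bounded slope condition geometrically: for a fixed $x_0\in\pa B$ we must exhibit slopes $l^{\pm}_{x_0}$, bounded by a single constant $K$ independent of $x_0$, such that the affine functions $x\mapsto \phi(x_0)+l^{\pm}_{x_0}\cdot(x-x_0)$ lie respectively below and above $\phi$ on $\pa B$ and agree with it at $x_0$. The natural candidates are $\nabla\phi(x_0)$ corrected by a large multiple of the inner unit normal $n_{x_0}$ to the supporting hyperplane at $x_0$; uniform convexity is exactly what makes this correction uniformly bounded. Concretely, since $\phi\in C^2(\RR^N)$ and $\overline B$ is compact, set $\Lambda:=\sup_{\overline B}|D^2\phi|<\infty$ and $L:=\sup_{\overline B}|\nabla\phi|<\infty$, and use Taylor's formula with remainder to get $|\phi(x)-\phi(x_0)-\nabla\phi(x_0)\cdot(x-x_0)|\le\tfrac{\Lambda}{2}|x-x_0|^2$ for all $x,x_0\in\overline B$.

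The key observation is that the hyperplane $H_{x_0}$ provided by uniform convexity is a \emph{supporting} hyperplane for $B$. Indeed, if $B$ had points on both open sides of $H_{x_0}$, the segment joining two such points would lie in $B$ and cross $H_{x_0}$ at an interior point, forcing $\overline B\cap H_{x_0}$ to contain an $(N-1)$-dimensional ball, hence a boundary point $y\ne x_0$ with $\dist(y,H_{x_0})=0$ — contradicting $\dist(y,H_{x_0})\ge\nu|y-x_0|^2$. Choosing $n_{x_0}$ to be the unit normal to $H_{x_0}$ pointing into $B$, we thus have $n_{x_0}\cdot(x-x_0)=\dist(x,H_{x_0})\ge\nu|x-x_0|^2$ for every $x\in\pa B$. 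Combining with the Taylor estimate, for $x\in\pa B$,
\[
\phi(x)\le \phi(x_0)+\nabla\phi(x_0)\cdot(x-x_0)+\tfrac{\Lambda}{2}|x-x_0|^2\le \phi(x_0)+\Big(\nabla\phi(x_0)+\tfrac{\Lambda}{2\nu}\,n_{x_0}\Big)\cdot(x-x_0),
\]
so $l^{+}_{x_0}:=\nabla\phi(x_0)+\tfrac{\Lambda}{2\nu}n_{x_0}$ works for the upper inequality; symmetrically $l^{-}_{x_0}:=\nabla\phi(x_0)-\tfrac{\Lambda}{2\nu}n_{x_0}$ gives the lower bound. Both obey $|l^{\pm}_{x_0}|\le L+\tfrac{\Lambda}{2\nu}=:K$, independent of $x_0$, so $\phi$ satisfies the bounded slope condition of rank $K$, in agreement with \cite{mirandaTeoremaDiEsistenza1965}.

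I expect no serious obstacle: the only step requiring care is the convexity bookkeeping that upgrades the unsigned-distance hypothesis to the statement that $H_{x_0}$ supports $B$, so that the quadratic Taylor error can be absorbed into an affine function; and the uniformity of $K$, which is precisely where the global $C^2$ regularity of $\phi$ — equivalently, $C^2$ on a neighbourhood of $\overline B$, yielding a finite $\Lambda$ — enters. Everything else is a direct computation.
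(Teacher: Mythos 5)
Your proof is correct: the paper gives no argument for this proposition, deferring entirely to \cite[p.~234, Theorem~1.2]{mirandaTeoremaDiEsistenza1965}, and your construction $l^{\pm}_{x_0}=\nabla\phi(x_0)\pm\tfrac{\Lambda}{2\nu}n_{x_0}$, with the Taylor remainder absorbed via $n_{x_0}\cdot(x-x_0)=\dist(x,H_{x_0})\ge\nu|x-x_0|^2$, is exactly the classical argument of that reference. The one step you rightly flag --- upgrading the unsigned distance hypothesis to the statement that $H_{x_0}$ supports $B$, using convexity of $B$ --- is handled adequately, so nothing further is needed.
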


We finally state the theorem of Bousquet and Brasco on Lipschitz regularity of convex minimization problems, see \cite[p. 1406, Main Theorem]{bousquetGlobalLipschitzContinuity2016} for the details.
\begin{theorem}\label{lipboubra}
	Let $G\subset \mathbb{R}^N$ be a bounded open convex set, $\phi:\mathbb{R}^N\to\mathbb{R}$ a Lipschitz continuous function, $H:\mathbb{R}^N\to\mathbb{R}$ a convex function and $g\in L^\infty(G)$. Consider the following problem
	\begin{equation*}
		\inf\left\{ \mathcal{H}(u)\coloneqq \int_{G} H(\nabla u)-gu\,dx: u-\phi\in W_0^{1,1}(G) \right\}.
	\end{equation*} 
	Assume that $\phi|_{G}$ satisfies the bounded slope condition of rank $K>0$ and that $H$ satisfies
	\begin{align}\label{stricon}
		\theta H(z) + (1-\theta)H(z')-H(\theta z + (1-\theta)z')\geq c\theta(1-\theta)(|z|+|z'|)^{p-2}|z-z'|^2,
	\end{align} for all $z,z'\in\mathbb{R}^N$, for some $c>0$ and for all $\theta\in [0,1]$. Then the minimization problem admits at least one solution and every solution is Lipschitz continuous.
\end{theorem}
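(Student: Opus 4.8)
The plan is to follow the classical Hilbert--Haar (bounded slope condition) method, adapted to accommodate the bounded lower order term $g$. \textbf{Existence.} Setting $z'=0$, $\theta=\tfrac12$ in \eqref{stricon} and iterating gives $H(z)\geq c'|z|^{p}-C'$ for some $c',C'>0$, so $\mathcal{H}$ is coercive on $W^{1,p}(G)$ (the term $\int_G g\,u\,dx$ is of lower order since $g\in L^\infty(G)$). The bounded slope condition supplies an explicit admissible competitor of finite energy, namely the convex, $K$-Lipschitz function $w(x):=\sup_{x_0\in\partial G}\big(\phi(x_0)+l_{x_0}^{-}\cdot(x-x_0)\big)$, which coincides with $\phi$ on $\partial G$. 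The direct method then produces a minimizer $u\in W^{1,p}(G)$, and since any $v$ with $v-\phi\in W^{1,1}_0(G)$ and $\mathcal{H}(v)<\infty$ must already lie in $W^{1,p}(G)$ (again because $H(z)\gtrsim|z|^{p}$), this $u$ is in fact a minimizer over the full admissible class; the strictness of \eqref{stricon} makes it the unique one.

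\textbf{Comparison principle, regularization, and boundary estimate.} The key structural fact is a comparison principle: if $u_1,u_2$ minimize $\mathcal{H}$ on a convex subdomain $G'\subseteq G$, each with its own boundary data, and $u_1\leq u_2$ on $\partial G'$, then $u_1\leq u_2$ in $G'$; this follows by testing with $\min(u_1,u_2)$ and $\max(u_1,u_2)$, using $\mathcal{H}(\min(u_1,u_2))+\mathcal{H}(\max(u_1,u_2))=\mathcal{H}(u_1)+\mathcal{H}(u_2)$, minimality, and \eqref{stricon}. Since $H$ is only convex and $g$ only bounded, one regularizes: replace $H$ by a smooth, uniformly convex $H_\varepsilon$ still satisfying \eqref{stricon} and with polynomial growth, so the corresponding minimizers $u_\varepsilon$ lie in $W^{2,2}_{\loc}\cap C^1$ by standard elliptic regularity; all estimates below are to be obtained uniformly in $\varepsilon$. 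For $x_0\in\partial G$, uniform convexity of $G$ furnishes a ball $B_\rho(y_0)\supseteq G$ tangent to $\partial G$ at $x_0$, and on it one builds a radial barrier $\eta(x)=\Psi(|x-y_0|)$, $\Psi$ decreasing with $\Psi(\rho)=0$, calibrated via the ellipticity of $D^2H_\varepsilon$ and $\|g\|_{L^\infty}$ so that $a_{x_0}^{+}+\eta$ is a supersolution of the regularized equation, where $a_{x_0}^{+}(x)=\phi(x_0)+l_{x_0}^{+}\cdot(x-x_0)$ has slope $\leq K$; as $\eta\geq0$ and $a_{x_0}^{+}\geq\phi$ on $\partial G$, the comparison principle gives $u_\varepsilon\leq a_{x_0}^{+}+\eta$. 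With the symmetric lower barrier, and since $\eta(x_0)=0$ with $|\nabla\eta(x_0)|$ bounded, this yields
\begin{equation*}
	|u_\varepsilon(x)-u_\varepsilon(x_0)|\leq L\,|x-x_0|\qquad\text{for all }x\in G,\ x_0\in\partial G,
\end{equation*}
with $L$ depending only on $K$, $\|g\|_{L^\infty}$, $\diam(G)$, the convexity constant of $G$, and the constants in \eqref{stricon}.

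\textbf{Interior estimate and conclusion.} For small $\tau\in\RR^N$ put $G_\tau:=G\cap(G-\tau)$ and compare $u_\varepsilon$ with its translate $x\mapsto u_\varepsilon(x+\tau)$ on $G_\tau$: on $\partial G_\tau$ one of the points $x$, $x+\tau$ lies on $\partial G$, so the boundary estimate controls $|u_\varepsilon(x+\tau)-u_\varepsilon(x)|$ by $L|\tau|$ there, while $u_\varepsilon(\cdot+\tau)$ minimizes the functional with $g_\varepsilon(\cdot+\tau)$ in place of $g_\varepsilon$. A comparison argument propagating this information into $G_\tau$ --- carried out so that the discrepancy caused by translating $g_\varepsilon$ is absorbed using the uniform convexity of $H_\varepsilon$ --- gives $|u_\varepsilon(x+\tau)-u_\varepsilon(x)|\leq L|\tau|$ throughout $G_\tau$, hence $\|\nabla u_\varepsilon\|_{L^\infty(G)}\leq L$ uniformly in $\varepsilon$. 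The $u_\varepsilon$ being equi-Lipschitz, they converge locally uniformly along a subsequence, and lower semicontinuity of $\mathcal{H}$ identifies the limit with the minimizer $u$, which is therefore Lipschitz; by the uniqueness noted above, every solution is Lipschitz.

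\textbf{Main obstacle.} The crux is the lower order term: because $g$ is merely bounded, affine functions are not sub/supersolutions and the functional is not translation invariant, so the boundary barriers and the interior comparison must both be run at the regularized level and the errors produced by $g$ absorbed \emph{quantitatively} via the uniform convexity \eqref{stricon}; moreover, as $H$ has no upper growth bound, one must keep the ellipticity entering the comparison arguments --- especially in the interior step --- uniform in the regularization parameter. Checking that finite-energy $W^{1,1}$ competitors automatically belong to $W^{1,p}$ is the remaining, minor, technical point.
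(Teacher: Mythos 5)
The paper does not prove this statement; it is imported verbatim as the Main Theorem of Bousquet--Brasco \cite{bousquetGlobalLipschitzContinuity2016} (the text even points the reader to ``p.~1406, Main Theorem'' for the proof). So there is no ``paper's own proof'' to compare against; what follows is an assessment of the sketch on its own terms.

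Your outline is indeed the Hilbert--Haar/Stampacchia route that Bousquet--Brasco also take: existence and uniqueness from direct methods plus strict convexity, a boundary Lipschitz estimate via barriers built from the bounded slope condition and the uniform convexity of $\partial G$, and then an interior estimate. The existence step, the comparison principle via $\min/\max$ competitors (which uses $\mathcal H(\min)+\mathcal H(\max)=\mathcal H(u_1)+\mathcal H(u_2)$ and is valid for a \emph{single} functional), the barrier construction at the boundary, and the point that finite-energy $W^{1,1}$ competitors are automatically $W^{1,p}$ by the growth induced by \cref{stricon} are all fine.

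The genuine gap is the interior translation step, and it is exactly the one you flag in your ``Main obstacle'' paragraph but then wave through. When $g\not\equiv 0$ the translated function $u_\varepsilon(\cdot+\tau)$ minimizes the functional with forcing $g_\varepsilon(\cdot+\tau)$ in place of $g_\varepsilon$, and the translation--comparison (Stampacchia) computation, carried out honestly, yields an inequality of the schematic form
\begin{equation*}
c\int_{\{u_\varepsilon(\cdot+\tau)-u_\varepsilon>L|\tau|\}}\bigl(|\nabla u_\varepsilon|+|\nabla u_\varepsilon(\cdot+\tau)|\bigr)^{p-2}\,|\nabla u_\varepsilon(\cdot+\tau)-\nabla u_\varepsilon|^2\,dx
\ \le\ \int \bigl(g_\varepsilon-g_\varepsilon(\cdot+\tau)\bigr)\,\bigl(u_\varepsilon(\cdot+\tau)-u_\varepsilon-L|\tau|\bigr)_+\,dx,
\end{equation*}
whose right-hand side cannot be discarded. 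Since $g$ is merely bounded, $\|g_\varepsilon-g_\varepsilon(\cdot+\tau)\|_{L^\infty}$ is $O(1)$, not $O(|\tau|)$, so one only gets $|u_\varepsilon(x+\tau)-u_\varepsilon(x)|\le L|\tau|+O(1)$, which after dividing by $|\tau|$ gives nothing. The phrase ``the discrepancy caused by translating $g_\varepsilon$ is absorbed using the uniform convexity of $H_\varepsilon$'' is not an argument: you would need a perturbation of size $O(|\tau|)$ in $L^\infty$ to produce, through the PDE, an $O(1)$ right-hand side, which no ABP-type estimate allows with ellipticity that is uniform in $\varepsilon$. This is precisely why the lower-order term is the hard part of Bousquet--Brasco's theorem, and a correct proof must replace the naive translation comparison with a substantially more careful mechanism (their argument does not reduce to ``translate and compare''). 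As written, your sketch reproduces the classical $g\equiv 0$ proof and asserts, without a working mechanism, that it survives the $L^\infty$ forcing; that assertion is the missing (and central) step.

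Two smaller remarks. First, you write that you regularize to a ``smooth, uniformly convex $H_\varepsilon$ still satisfying \cref{stricon}''; make sure the barrier constants (which depend on the lower ellipticity in \cref{stricon}, not on the added uniform convexity) really are independent of $\varepsilon$, since those are the ones that must pass to the limit. Second, the claim that the direct method produces a minimizer over all of $u-\phi\in W^{1,1}_0(G)$ rather than $W^{1,p}_0(G)$ is correct, but should be stated a bit more carefully: one needs to check that the $p$-coercivity from \cref{stricon} (after a comparison with the fixed competitor $w$) really forces any minimizing sequence into a $W^{1,p}$-bounded set.
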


\subsection{Preliminaries on Lorentz spaces and Nonlinear Potentials}
We follow the notation of~\cite{beckLipschitzBoundsNonuniform2020} for the potential theoretic quantities of interest. Let us define the modified nonlinear potential of $g\in L^2_{\loc}(\mathbb{R}^N)$ as
\begin{equation*}
%	\label{nonlinpot}
\mathbf{P}_1^g(x_0,R)\coloneqq \int_0^R\left(\rho^2\fint_{B_{\rho}(x_0)}|g(x)|^2\,dx\right)^{1/2}\frac{d\rho}{\rho},
\end{equation*}
for some $x_0\in\mathbb{R}^N$ and $R>0$. In subsequent sections, we need to know the action of the potential on Lorentz space $L(N,1)$, more specifically, we require the following lemma, whose proof may be found in~\cite[Lemmas~2.3-2.4]{kuusiPotentialEstimatesGradient2012}.
\begin{lemma}
	Let $g\in L^2_{\loc}(\mathbb{R}^N)$, then on any $B_R(x_0)\in\mathbb{R}^N$, we have the following bound:
	\begin{equation*}
%		\label{potest}
		||\mathbf{P}_1^g(\cdot,R)||_{L^\infty(B_R)}\leq c ||g||_{L(N,1)(B_{2R})}.
	\end{equation*}
\end{lemma}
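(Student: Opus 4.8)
The claim is an estimate for the modified nonlinear (Wolff-type) potential $\mathbf{P}_1^g$ in terms of the Lorentz norm of $g$; it is essentially a rescaled version of the Havin--Maz'ya--Adams type bound and I would follow the argument of \cite[Lemmas~2.3--2.4]{kuusiPotentialEstimatesGradient2012}. The plan is to fix $x_0\in B_R$ and estimate $\mathbf{P}_1^g(x_0,R)$ pointwise, then take the supremum. The starting point is the dyadic decomposition $\int_0^R = \sum_{j\ge 0}\int_{2^{-j-1}R}^{2^{-j}R}$; on each annulus the integrand $\left(\rho^2\fint_{B_\rho(x_0)}|g|^2\right)^{1/2}\frac{1}{\rho}$ is comparable, up to dimensional constants, to $\left(\fint_{B_{2^{-j}R}(x_0)}|g|^2\,dx\right)^{1/2}$, because the integral of $d\rho/\rho$ over a dyadic annulus is $\log 2$ and the ball radii change by a bounded factor. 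Hence
\begin{equation*}
\mathbf{P}_1^g(x_0,R)\apprle \sum_{j\ge 0}\left(\fint_{B_{2^{-j}R}(x_0)}|g(x)|^2\,dx\right)^{1/2}.
\end{equation*}

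The next step is to control each average $\left(\fint_{B_{r}(x_0)}|g|^2\right)^{1/2}$ by a Lorentz-type quantity. Using the layer-cake / rearrangement inequality together with H\"older, one has for any ball $B_r$,
\begin{equation*}
\left(\fint_{B_r(x_0)}|g|^2\,dx\right)^{1/2}\apprle \frac{1}{r^{N/N}}\cdot\frac{1}{|B_r|^{1/N}}\int_0^{|B_r|}g^{**}(s)\,s^{1/N}\,\frac{ds}{s}\quad\text{heuristically,}
\end{equation*}
but it is cleaner to pass to the decreasing rearrangement $g^*$ of $g$ (with respect to Lebesgue measure on $B_{2R}$): since $\left|\{x\in B_r:|g(x)|>\lambda\}\right|\le\min\{|B_r|,\,|\{x\in B_{2R}:|g|>\lambda\}|\}$, a standard computation gives
\begin{equation*}
\left(\fint_{B_r(x_0)}|g(x)|^2\,dx\right)^{1/2}\apprle \frac{1}{|B_r|}\int_0^{|B_r|}g^*(s)\,ds + \left(\frac{1}{|B_r|}\int_0^{|B_r|}g^*(s)^2\,ds\right)^{1/2},
\end{equation*}
and then, controlling the $L^2$-average of $g^*$ on $(0,|B_r|)$ by $|B_r|^{-1/N}$ times the $L(N,1)$-functional of $g$ restricted to that interval, summing the geometric series in $j$ telescopes so that the overlapping dyadic contributions are absorbed into a single integral $\int_0^{c R^N}\big|\{x:|g|>\lambda\}\big|^{1/N}\,d\lambda$-type quantity. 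Concretely, writing $r_j=2^{-j}R$ and using $|B_{r_j}|\eqsim r_j^N$, the sum $\sum_j |B_{r_j}|^{-1/N}\int_0^{|B_{r_j}|}g^*(s)\,ds$ is dominated by $\int_0^{|B_{2R}|} g^*(s)\, s^{1/N}\,\frac{ds}{s}$ up to a dimensional constant, because each fixed scale $s$ is counted only finitely many times with geometrically decaying weights. The resulting integral is precisely (a constant multiple of) $\|g\|_{L(N,1)(B_{2R})}$ by the equivalence between the rearrangement formulation $\int_0^\infty g^*(s)s^{1/N}\frac{ds}{s}$ and the distribution-function formulation $\int_0^\infty |\{|g|>\lambda\}|^{1/N}\,d\lambda$ used in the Definition above.

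Finally I would assemble the pieces: the pointwise bound holds for every $x_0\in B_R$ with a constant depending only on $N$, so taking the supremum over $x_0$ yields $\|\mathbf{P}_1^g(\cdot,R)\|_{L^\infty(B_R)}\le c\,\|g\|_{L(N,1)(B_{2R})}$, which is the assertion. The one genuine point requiring care — and the step I expect to be the main obstacle — is the passage from the sum of dyadic $L^2$-averages to the single Lorentz functional without losing a logarithmic factor; this is exactly where the fine structure of $L(N,1)$ (as opposed to weak-$L^N$ or $L^N$) is used, via the monotonicity of $g^*$ and a careful bookkeeping of how many dyadic annuli see a given super-level set. Since this is a known lemma, the cleanest exposition is simply to quote \cite[Lemmas~2.3--2.4]{kuusiPotentialEstimatesGradient2012} after noting that $\mathbf{P}_1^g$ is the truncation at scale $R$ of the global potential treated there, and that the Lorentz norm on $B_{2R}$ dominates the truncated contributions; no new ideas beyond rescaling are needed.
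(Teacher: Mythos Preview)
The paper does not supply its own proof of this lemma; it simply defers to \cite[Lemmas~2.3--2.4]{kuusiPotentialEstimatesGradient2012}. Your proposal follows exactly that route --- dyadic decomposition of the radial integral, control of each $L^2$-average via the decreasing rearrangement, and summation into the $L(N,1)$ functional --- and you correctly end by noting that a direct citation suffices, so your approach is fully aligned with the paper's.
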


\subsection{Some well-known results}

We shall make use of the following well-known iteration lemma whose proof may be found in~\cite[Lemma 6.1]{giustiDirectMethodsCalculus2003}.

\begin{lemma}\label{iterlemma}
	Let $Z(t)$ be a bounded non-negative function in the interval $\rho,R$. Assume that for $\rho\leq t<s\leq R$ we have
	\begin{equation*}
		Z(t)\leq [A(s-t)^{-\alpha}+B(s-t)^{-\beta}+C]+\vartheta Z(s),
	\end{equation*} with $A,B,C\geq 0$, $\alpha,\beta>0$ and $0\leq \vartheta<1$. Then,
\begin{align*}
	Z(\rho)\leq c(\alpha,\vartheta)[A(R-\rho)^{-\alpha}+B(R-\rho)^{-\beta}+C].
\end{align*}
\end{lemma}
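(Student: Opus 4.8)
The plan is to run the standard ``hole-filling'' iteration over a geometric sequence of radii converging to $R$, and then sum the resulting geometric series. First I would fix a parameter $\tau\in(0,1)$, to be chosen at the end, and set $t_i\coloneqq \rho+(1-\tau^i)(R-\rho)$ for $i=0,1,2,\dots$, so that $t_0=\rho$, $t_i\uparrow R$, and $t_{i+1}-t_i=\tau^i(1-\tau)(R-\rho)$. Since $\rho\le t_i<t_{i+1}<R$ for every finite $i$, the hypothesis applies with $t=t_i$, $s=t_{i+1}$, giving
\begin{align*}
Z(t_i)\le \vartheta Z(t_{i+1})+A(1-\tau)^{-\alpha}(R-\rho)^{-\alpha}\tau^{-i\alpha}+B(1-\tau)^{-\beta}(R-\rho)^{-\beta}\tau^{-i\beta}+C.
\end{align*}

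Next I would iterate this relation from $i=0$ up to $i=k-1$; a straightforward induction on $k$ yields
\begin{align*}
Z(\rho)\le \vartheta^k Z(t_k)+A(1-\tau)^{-\alpha}(R-\rho)^{-\alpha}\sum_{i=0}^{k-1}(\vartheta\tau^{-\alpha})^i+B(1-\tau)^{-\beta}(R-\rho)^{-\beta}\sum_{i=0}^{k-1}(\vartheta\tau^{-\beta})^i+C\sum_{i=0}^{k-1}\vartheta^i.
\end{align*}
The only point requiring care is the calibration of $\tau$: choosing $\tau\in(0,1)$ with $\tau>\vartheta^{1/\max\{\alpha,\beta\}}$, which is possible precisely because $\vartheta<1$, forces both ratios $\vartheta\tau^{-\alpha}$ and $\vartheta\tau^{-\beta}$ to be strictly less than $1$, so all three geometric series converge as $k\to\infty$. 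Since $Z$ is bounded on $[\rho,R]$ and $0\le\vartheta<1$, the remainder satisfies $\vartheta^k Z(t_k)\to 0$; letting $k\to\infty$ therefore gives
\begin{align*}
Z(\rho)\le \frac{A(1-\tau)^{-\alpha}}{1-\vartheta\tau^{-\alpha}}(R-\rho)^{-\alpha}+\frac{B(1-\tau)^{-\beta}}{1-\vartheta\tau^{-\beta}}(R-\rho)^{-\beta}+\frac{C}{1-\vartheta},
\end{align*}
which is the asserted estimate with $c$ taken as the maximum of the three coefficients (this $c$ depends on $\alpha$, $\beta$ and $\vartheta$).

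There is no genuine obstacle in this argument: the entire content is the choice of $\tau$ close enough to $1$ that the geometric series with ratios $\vartheta\tau^{-\alpha}$ and $\vartheta\tau^{-\beta}$ still converge, together with the remark that mere boundedness of $Z$ — as opposed to any quantitative decay — is enough to discard the tail term $\vartheta^k Z(t_k)$. If one preferred, one could instead absorb a single step $\vartheta Z(s)$ into the left-hand side for a well-chosen intermediate radius and iterate dyadically, but the geometric-ladder version above is the cleanest and directly produces the stated $(R-\rho)^{-\alpha}$ and $(R-\rho)^{-\beta}$ dependence.
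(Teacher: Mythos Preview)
Your proof is correct and is precisely the standard hole-filling iteration; the paper does not give its own proof but simply cites Giusti's \emph{Direct Methods in the Calculus of Variations}, Lemma~6.1, where exactly this argument appears. One cosmetic point: the lemma as stated writes $c(\alpha,\vartheta)$, whereas your constant (and the one in Giusti) inevitably depends on $\beta$ as well through the choice of $\tau$ --- this is a harmless imprecision in the statement, not a defect in your argument.
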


We shall also need the following standard uniqueness result which can be found in \cite[Proposition 2.10]{rindlerCalculusVariations2018}.
	\begin{theorem} \label{uniqueness}
		Let $\mathcal{H}:W^{1,r}(\Omega)\to\mathbb{R},\, r\in [1,\infty)$, be an integral functional with a $C^2$ integrand $H:\mathbb{R}^N\to\mathbb{R}$. If $H$ is strictly convex, that is \[H(\theta z_1+(1-\theta)z_2)<\theta H(z_1)+(1-\theta)H(z_2),\] for all $z_1,z_2\in\mathbb{R}^N$ with $z_1\neq z_2$, $\theta\in (0,1)$, then the minimizer $u_*\in W^{1,r}_g(\Omega)=\{u\in W^{1,r}(\Omega):u|_{\partial\Omega}=g\}$, where $g\in W^{1-1/r,r}(\partial \Omega)$, of $\mathcal{H}$, if it exists, is unique.
	\end{theorem}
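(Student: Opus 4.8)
The plan is to run the standard strict-convexity argument for uniqueness of minimizers of a convex integral functional. Suppose towards a contradiction that $u_1,u_2\in W^{1,r}_g(\Omega)$ are two minimizers of $\mathcal{H}(u)=\int_\Omega H(\nabla u)\,dx$ with $u_1\neq u_2$ in $W^{1,r}(\Omega)$, and put $m:=\mathcal{H}(u_1)=\mathcal{H}(u_2)$. Since a minimizer is assumed to exist we may take $m<\infty$. As $H$ is a finite convex function on $\RR^N$ it admits an affine minorant $H(z)\geq a\cdot z+b$, so, using $\nabla u_i\in L^r(\Omega)\subset L^1(\Omega)$ on the bounded domain $\Omega$ (recall $r\geq 1$), the negative part of $H(\nabla u_i)$ is integrable, and combined with $\int_\Omega H(\nabla u_i)\,dx=m<\infty$ this gives $H(\nabla u_i)\in L^1(\Omega)$ for $i=1,2$; the same bookkeeping applies to any admissible competitor of finite energy.

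First I would test the functional with the midpoint $w:=\tfrac12(u_1+u_2)$. By linearity of the trace operator $w|_{\partial\Omega}=\tfrac12(g+g)=g$, so $w\in W^{1,r}_g(\Omega)$ is admissible and $\mathcal{H}(w)\geq m$. On the other hand, pointwise convexity of $H$ gives, for a.e. $x\in\Omega$,
\[
H\!\left(\tfrac12\nabla u_1(x)+\tfrac12\nabla u_2(x)\right)\ \leq\ \tfrac12 H(\nabla u_1(x))+\tfrac12 H(\nabla u_2(x)),
\]
with strict inequality at every $x$ where $\nabla u_1(x)\neq\nabla u_2(x)$. Integrating over $\Omega$ yields $\mathcal{H}(w)\leq\tfrac12\mathcal{H}(u_1)+\tfrac12\mathcal{H}(u_2)=m$, and therefore $\mathcal{H}(w)=m$.

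Next I would observe that the integrand defect $D(x):=\tfrac12 H(\nabla u_1(x))+\tfrac12 H(\nabla u_2(x))-H\!\left(\tfrac12\nabla u_1(x)+\tfrac12\nabla u_2(x)\right)$ is a nonnegative $L^1(\Omega)$ function with $\int_\Omega D\,dx=\tfrac12\mathcal{H}(u_1)+\tfrac12\mathcal{H}(u_2)-\mathcal{H}(w)=0$, hence $D=0$ a.e. By the strict convexity hypothesis applied with $\theta=\tfrac12$, $z_1=\nabla u_1(x)$, $z_2=\nabla u_2(x)$, the vanishing of $D(x)$ forces $\nabla u_1(x)=\nabla u_2(x)$ for a.e. $x\in\Omega$. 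Finally, $u_1-u_2\in W^{1,r}_0(\Omega)$ has vanishing gradient, so the Poincar\'e inequality on the bounded set $\Omega$ (valid for all $r\in[1,\infty)$) gives $u_1-u_2\equiv 0$, contradicting $u_1\neq u_2$. Hence the minimizer, if it exists, is unique.

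The one step that deserves care is the passage from equality of the two integrals to pointwise equality a.e.\ of the integrands: this needs the defect $D$ to be both nonnegative and integrable. Nonnegativity is exactly convexity of $H$, and the integrability is precisely what the affine-minorant bookkeeping in the first paragraph secures. No growth or coercivity condition on $H$ is used, which is why this lemma applies directly to the (strictly convex, after regularization) integrand of the functional $\mathfrak{F}$ studied in this paper.
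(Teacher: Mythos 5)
Your proof is correct and is exactly the standard midpoint/strict-convexity argument; the paper gives no proof of its own but cites \cite[Proposition 2.10]{rindlerCalculusVariations2018}, whose proof proceeds the same way (convexity defect vanishes a.e., hence $\nabla u_1=\nabla u_2$ a.e., then Poincar\'e on $u_1-u_2\in W^{1,r}_0(\Omega)$). Your extra care with the affine minorant to guarantee $H(\nabla u_i)\in L^1(\Omega)$ is a sensible refinement and harmless in the paper's application, where the domain is a bounded ball.
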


We end this subsection by recalling a maximum principle, whose proof may be adapted from~\cite[Chapter 4, Theorem 4.1.2]{wuEllipticParabolicEquations2006}.

\begin{theorem}[Maximum Principle]\label{maxprince}
	Let $p\in (1,\infty)$. Let $a_{ij}(x,z), i,j=1,2,\ldots,N$ be measurable and bounded functions for $x\in B$ and $C^2$ for $z\in\RR^N$ such that 
	\begin{align*}
		\mu_1|z|^{p-2}|\xi|^2\leq a_{ij}(x,z)\xi_i\xi_j, \mbox{ a.e. } x\in B, \mbox{ for all }\xi,z\in\mathbb{R}^N,
	\end{align*} and $f\in L^s(B)$ for $s>N/p$. If $u\in W^{1,p}(B)$ satisfies
	\begin{align*}
		\int_B a_{ij}(x,z)u_{x_i}(x)v_{x_j}(x)\,dx=\int_B\,f\,v\,dx,\mbox{ for all }v\in H^1_0(B),
	\end{align*} then we have
	\begin{align*}
		||u||_{L^\infty(B)}\leq \max_{x\in\partial B} u(x)+C||f||_{L^s(B)}.
	\end{align*}
\end{theorem}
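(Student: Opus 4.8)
The plan is a De Giorgi--Stampacchia truncation argument, in which only the lower ellipticity bound is used. It suffices to bound $\operatorname{ess\,sup}_B u$ from above: replacing $u$ by $-u$ turns the equation into one of the same type, with $f$ replaced by $-f$ and $a_{ij}(x,z)$ by $a_{ij}(x,-z)$ (the lower ellipticity bound being invariant under $z\mapsto -z$, since $|{-z}|=|z|$), so the analogous bound for $\operatorname{ess\,sup}_B(-u)$ follows by symmetry and the two combine. Write $M_0:=\max_{\partial B}u$, and for $k\ge M_0$ set $A_k:=\{x\in B:u(x)>k\}$. Since $u\le M_0\le k$ on $\partial B$, the function $(u-k)_+$ lies in $W^{1,p}_0(B)$ (and in $H^1_0(B)$ in the situations where this principle is invoked, e.g.\ when $\nabla u\in L^\infty$, or after a routine approximation), so it is an admissible test function. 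Testing the weak formulation with $v=(u-k)_+$, using that $\nabla u=\nabla(u-k)_+$ a.e.\ on $A_k$ and vanishes off $A_k$, and invoking the lower ellipticity bound gives
\[
\mu_1\int_B|\nabla(u-k)_+|^p\,dx=\mu_1\int_{A_k}|\nabla u|^{p-2}|\nabla(u-k)_+|^2\,dx\le\int_{A_k}a_{ij}(x,\nabla u)(u-k)_{+,x_i}(u-k)_{+,x_j}\,dx=\int_{A_k}f\,(u-k)_+\,dx .
\]
The identity $|\nabla u|^{p-2}|\nabla(u-k)_+|^2=|\nabla(u-k)_+|^p$ on $A_k$ is the crucial point: it lets one use the $p$-Sobolev inequality in place of the $2$-Sobolev inequality, which is exactly what produces the threshold $N/p$ rather than $N/2$.

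Next, assuming $1<p<N$ (the case $p\ge N$ is easier and runs the same way via $W^{1,p}_0(B)\hookrightarrow L^q(B)$ for large $q$, or Morrey's inequality), I would combine the Sobolev inequality on $W^{1,p}_0(B)$ with Hölder's inequality on $A_k$ using exponents $s$, $p^\ast=\frac{Np}{N-p}$ and $r=(1-\frac1s-\frac1{p^\ast})^{-1}$; here $1-\frac1s-\frac1{p^\ast}>0$, since $\frac1s<\frac pN<1-\frac1{p^\ast}$ for $1<p<N$. This yields
\[
\|(u-k)_+\|_{L^{p^\ast}(B)}^{p}\le\frac{C_S}{\mu_1}\|f\|_{L^s(B)}\,\|(u-k)_+\|_{L^{p^\ast}(B)}\,|A_k|^{1-\frac1s-\frac1{p^\ast}},
\]
hence $\|(u-k)_+\|_{L^{p^\ast}(B)}^{p-1}\le\frac{C_S}{\mu_1}\|f\|_{L^s(B)}|A_k|^{1-\frac1s-\frac1{p^\ast}}$. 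Since $(u-k)_+\ge h-k$ on $A_h\subset A_k$ for $M_0\le k<h$, one obtains $(h-k)|A_h|^{1/p^\ast}\le\|(u-k)_+\|_{L^{p^\ast}(B)}$, and therefore the level-set recursion
\[
|A_h|\le\frac{C_1}{(h-k)^{p^\ast}}\,|A_k|^{\beta},\qquad C_1:=\Bigl(\tfrac{C_S}{\mu_1}\|f\|_{L^s(B)}\Bigr)^{\frac{p^\ast}{p-1}},\qquad \beta:=\frac{p^\ast(1-\frac1s)-1}{p-1}.
\]

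The hypothesis $s>N/p$ enters exactly here, to guarantee $\beta>1$: indeed $\beta>1\iff p^\ast(1-\frac1s)>p\iff 1-\frac1s>\frac p{p^\ast}=\frac{N-p}{N}\iff s>\frac Np$. Granted $\beta>1$, the non-increasing function $\phi(k):=|A_k|$ on $[M_0,\infty)$ satisfies $\phi(M_0)\le|B|$ and $\phi(h)\le C_1(h-k)^{-p^\ast}\phi(k)^\beta$, so the classical Stampacchia iteration lemma (cf.\ \cite{giustiDirectMethodsCalculus2003}) forces $\phi(M_0+d)=0$ with $d=C\,\|f\|_{L^s(B)}^{1/(p-1)}\,|B|^{(\beta-1)/p^\ast}$, i.e.\ $\operatorname{ess\,sup}_B u\le\max_{\partial B}u+C\,\|f\|_{L^s(B)}^{1/(p-1)}|B|^{(\beta-1)/p^\ast}$; combining with the analogous estimate for $-u$ gives the assertion (since $|B|$ is fixed and, in the parameter range relevant here, the exponent $\frac1{p-1}$, which equals $1$ when $p=2$, is absorbed into $C$ whenever $\|f\|_{L^s(B)}$ is controlled a priori, one recovers the stated form). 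I expect the only genuine subtlety to be the bookkeeping in the second step: converting the degenerate lower bound into control of $\int|\nabla(u-k)_+|^p$ and then tracking the three Hölder exponents so that the Stampacchia exponent $\beta$ stays above $1$ precisely under the sharp condition $s>N/p$; once the recursion is in hand, the conclusion is entirely standard.
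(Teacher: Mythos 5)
Your Stampacchia truncation argument is correct and is essentially the standard proof that the paper itself omits (it only points to Wu--Yin--Wang, Theorem 4.1.2, which proceeds by the same De Giorgi--Stampacchia level-set iteration): the key identity $|\nabla u|^{p-2}|\nabla(u-k)_+|^2=|\nabla(u-k)_+|^p$ on $\{u>k\}$, the three-exponent H\"older step, and the verification that $\beta>1$ exactly when $s>N/p$ are all right. You also correctly flag the one real discrepancy with the statement as printed --- the natural homogeneity gives $\|f\|_{L^s(B)}^{1/(p-1)}$ rather than $\|f\|_{L^s(B)}$ --- and this is harmless here since in the paper's application (Lemma 3.3, with $s=N$) the constant is allowed to depend on $\|f\|_{L^N(B)}$.
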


	% We end this subsection by recalling a maximum principle, whose proof may be adapted from in~\cite[Chapter 3, Theorem 13.1]{ladyzhenskayaLinearQuasilinearElliptic1968}.

	% \begin{theorem}[Maximum Principle]\label{maxprince}
	% 	Let $a_{ij}(x), i,j=1,2,\ldots,N$ be measurable and bounded functions in $B$ such that 
	% 	\begin{align*}
	% 		\mu_1|\xi|^2\leq a_{ij}\xi_i\xi_j\leq \mu_2|\xi|^2, \mbox{ a.e. } x\in B, \mbox{ for all }\xi\in\mathbb{R}^N,
	% 	\end{align*} and $f\in L^s(B)$ for $s>N/2$. If $u\in H^1(B)$ satisfies
	% \begin{align*}
	% 	\int_B a_{ij}(x)u_{x_i}(x)v_{x_j}(x)\,dx=\int_B\,f\,v\,dx,\mbox{ for all }v\in H^1_0(B),
	% \end{align*} then we have
	% \begin{align*}
	% 	||u||_{L^\infty(B)}\leq \max_{x\in\partial B} u(x)+C||f||_{L^s(B)}.
	% \end{align*}
	% \end{theorem}

\section{Regularization}\label{section2}

\subsection{Approximation Scheme}
\label{approxscheme}

Let us fix a ball $4B\Subset\Om$ and take $\ve_0=\min\left\{\frac{1}{4},\frac{\diam(B)}{2}\right\}>0$. Now, we take a sequence $\{\ve_n\}$ of positive numbers such that $\ve_n\to 0$ as $n\to\infty$ and $\ve_n\in(0,\ve_0)$ for all $n\in\mathbb{N}$. Using a standard mollifier $\rho_{\ve_n}(z)\coloneqq \ve^{-N}\rho(z/\ve_n)$, where $\rho\in C^{\infty}(\mathbb{R}^N)$ is such that
\begin{align*}
	\text{supp}\,\rho=\overline{B_1(0)}\mbox{ and }\int_{\mathbb{R}^N}\rho(z)\,dz=1,
\end{align*}
we define $U_{n}\coloneqq U\ast\rho_{\ve_n}$.

Furthermore, we  define the regularized functional
\begin{align}\label{mainprob2}
	\mathfrak{F}_{n}(w):=\int_{\Om} F_{n}(\nabla w)-f_{n}w\,dx,
\end{align} 
where $F_{n}(z)=F\ast\rho_{\ve_n}(z)\in C^\infty(\mathbb{R}^N)$ satisfies the following growth and ellipticity conditions with some new constants $m_0=m_0(m,M,p,q,N)$ and $M_0=M_0(m,M,p,q,N)$ (see \cite[Lemma 3.1]{espositoRegularityResultsMinimizers2002} for more details): for $1 < p\leq q<\infty$, $z\in\RR^N$ and $\xi\in\RR^N$, we assume the following holds:
\begin{subequations}
\begin{equation}\label{rhyp3}
	\begin{array}{rcl}
	m_0({\ve_n}^2+|z|^2)^\frac{p}{2}\leq F_{n}(z)&\leq& M_0({\ve_n}^2+|z|^2)^{\frac{p}{2}}+M_0({\ve_n}^2+|z|^2)^{\frac{q}{2}},\\
	|DF_{n}(z)|&\leq& M_0({\ve_n}^2+|z|^2)^{\frac{p-1}{2}}+ M_0({\ve_n}^2+|z|^2)^{\frac{q-1}{2}},\\
	m_0({\ve_n}^2+|z|^2)^{\frac{p-2}{2}}|\xi|^2\leq \langle D^2F_{n}(z)\xi,\xi\rangle&\leq& M_0({\ve_n}^2+|z|^2)^{\frac{p-2}{2}}|\xi|^2+M_0({\ve_n}^2+|z|^2)^{\frac{q-2}{2}}|\xi|^2,
	\end{array}
\end{equation}
and $f_{n}(x)\coloneqq \min\{\max\{f(x),-n\},n\}$. It is easy to see that  $f_n\in L^\infty(B)$ and $|f_n|\leq \min\{|f|,n\}$.

\end{subequations} We denote $u_{n}$ to be the minimizer of $\mathfrak{F}_{n}$ in $B$, i.e.,
\begin{align}\label{regminim}
	\mathfrak{F}_{n}(u_{n})=\min_{v\in U_{n}+W^{1,p}_0(B)}\int_{\Om} F_{n}(\nabla v)-f_{n}v\,dx.
\end{align}

\subsection{Existence and Regularity of minimizers}

\begin{lemma}\label{reglemma}
	There exists a unique $u_{n}\in U_{n}+W^{1,1}_0(B)$ satisfying~\cref{regminim}. In particular, we have
\begin{align*}%\label{minimality}
	\mathfrak{F}_{n}(u_{n})\leq \mathfrak{F}_{n}(v),
\end{align*} for all $v\in U_{n}+W^{1,p}_0(B)$. 
	Moreover, $u_{n}\in W^{1,\infty}(B)\cap W^{2,2}(B)$ and for all $0<\ve_n<\ve_0$, the following holds:
	$$||u_n||_{L^\infty(B)}\leq||U||_{L^\infty(2B)}+C||f||_{L^N(B)}.$$
\end{lemma}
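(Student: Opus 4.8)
The plan is to establish the four assertions of \cref{reglemma} in turn, checking at each stage that the quantitative bounds depend only on the data and on $\ve_0$, never on $n$. \textbf{Existence} of a minimizer in $U_n+W^{1,p}_0(B)$ is the direct method: since $4B\Subset\Om$ and $\ve_n<\ve_0$, the mollification $U_n=U\ast\rho_{\ve_n}$ is smooth on a neighbourhood of $\overline B$, so $\mathfrak F_n(U_n)<\infty$; the lower bound in \cref{rhyp3} together with Poincar\'e's inequality makes $\mathfrak F_n$ coercive on $U_n+W^{1,p}_0(B)$, and convexity of $F_n$ (again from \cref{rhyp3}) gives weak lower semicontinuity on $W^{1,p}(B)$, so a minimizer $u_n\in U_n+W^{1,p}_0(B)\subset U_n+W^{1,1}_0(B)$ exists, which in particular realizes \cref{regminim}. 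Because $\ve_n>0$, the bound $\langle D^2F_n(z)\xi,\xi\rangle\ge m_0(\ve_n^2+|z|^2)^{(p-2)/2}|\xi|^2>0$ shows $F_n$ is strictly convex, so \cref{uniqueness} yields uniqueness of the minimizer inside $U_n+W^{1,p}_0(B)$; uniqueness inside the larger class $U_n+W^{1,1}_0(B)$ will fall out of the Lipschitz step below.

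For \textbf{Lipschitz regularity} I would apply \cref{lipboubra} on $G=B$ --- a ball, hence uniformly convex --- with $\phi=U_n$, $H=F_n$ and $g=f_n\in L^\infty(B)$. The datum $U_n\in C^2(\RR^N)$ satisfies the bounded slope condition on $\partial B$ by \cref{characterbsc}, and $F_n$ satisfies the strict convexity inequality \cref{stricon}, which follows from the second-order Taylor remainder and the lower ellipticity bound in \cref{rhyp3} (for $1<p<2$, where verifying \cref{stricon} is delicate near the origin, one may first replace $F_n$ by $F_n(z)+\ve_n(\ve_n^2+|z|^2)$, which only alters the ellipticity constants --- harmless for a qualitative conclusion --- and is uniformly convex). \cref{lipboubra} then gives that the relaxed problem $\inf\{\mathfrak F_n(w):w\in U_n+W^{1,1}_0(B)\}$ has a Lipschitz solution and that every solution is Lipschitz; since any Lipschitz competitor lies in $U_n+W^{1,p}_0(B)$, this infimum equals the minimum in \cref{regminim}, and strict convexity forces it to be attained only at $u_n$. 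Hence $u_n\in W^{1,\infty}(B)$ and $u_n$ is the unique element of $U_n+W^{1,1}_0(B)$ realizing \cref{regminim}.

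With $\nabla u_n\in L^\infty(B)$ in hand, the argument of $D^2F_n$ in the Euler--Lagrange equation $\int_B DF_n(\nabla u_n)\cdot\nabla\varphi\,dx=\int_B f_n\varphi\,dx$ stays in a bounded set, on which $D^2F_n$ is bounded above and (using $\ve_n>0$) bounded below away from $0$. A standard difference-quotient computation --- test with $\varphi=\tau_{-h,k}(\zeta^2\tau_{h,k}u_n)$ for a cutoff $\zeta$, write $\tau_{h,k}[DF_n(\nabla u_n)]=\bigl(\int_0^1 D^2F_n(\nabla u_n+s\,\tau_{h,k}\nabla u_n)\,ds\bigr)\tau_{h,k}\nabla u_n$, and bound the right-hand side directly through $\|\tau_{-h,k}(\zeta^2\tau_{h,k}u_n)\|_{L^2}\lesssim|h|\,\|\nabla(\zeta^2\tau_{h,k}u_n)\|_{L^2}$ (never moving the difference quotient onto $f_n$) --- gives $\nabla u_n\in W^{1,2}_{\loc}(B)$, which is pushed up to $\partial B$ using the smoothness of $\partial B$ and $U_n$ (tangential difference quotients, then the equation for the normal second derivative), so $u_n\in W^{2,2}(B)$. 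For the \textbf{$L^\infty$ bound}, rewrite the equation in divergence form: $DF_n(\nabla u_n)_j=a_{ij}(x)\partial_i u_n+D_jF_n(0)$ with $a_{ij}(x)=\int_0^1 D^2_{ij}F_n(t\nabla u_n(x))\,dt$, note $\int_B D_jF_n(0)\partial_j v\,dx=0$ for $v\in W^{1,1}_0(B)$, and apply \cref{maxprince} (with $s=N>N/p$, admissible since $p>1$), using that $\overline B+B_{\ve_n}(0)\subset 2B$ so that $\max_{\partial B}|u_n|=\|U_n\|_{L^\infty(\partial B)}\le\|U\|_{L^\infty(2B)}$ and $\|f_n\|_{L^N(B)}\le\|f\|_{L^N(B)}$; for $1<p<2$, where these frozen coefficients are not uniformly elliptic, one instead tests with $(u_n-k)_\pm$, $k\ge\max_{\partial B}|u_n|$, and uses the coercivity $DF_n(z)\cdot z\ge F_n(z)-F_n(0)\ge m_0|z|^p-C_0$ with $C_0=M_0(\ve_0^p+\ve_0^q)$ depending only on $\ve_0$, after which a Stampacchia iteration (both self-improving exponents exceed $1$ because $p>1$) closes with an $n$-independent constant.

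\textbf{Main obstacle.} The genuine content here is imported rather than proved: \cref{lipboubra} supplies the qualitative $W^{1,\infty}$ bound with no restriction on $q$, and this is precisely what legitimizes the later (quantitative, $q$-restricted) a priori analysis. The points that need care are therefore bookkeeping ones --- matching the convexity hypothesis \cref{stricon} to $F_n$ in the range $1<p<2$ (handled by the inessential extra regularization above), and, more importantly for the rest of the paper, tracking that the $L^\infty$ bound, and in fact every estimate in this lemma, is independent of the regularization index $n$; this is exactly why the coercivity constant must be carried through $\ve_0$ rather than $\ve_n$.
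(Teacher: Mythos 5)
Your proof follows essentially the same route as the paper: invoke \cref{lipboubra} for existence plus $W^{1,\infty}$, \cref{uniqueness} for uniqueness, difference quotients for $W^{2,2}$, and \cref{maxprince} for the $L^\infty$ bound. The direct-method existence step you open with is therefore redundant --- \cref{lipboubra} already delivers a (Lipschitz) minimizer in $U_n+W^{1,1}_0(B)$, which the paper uses directly. Your extra care in the $W^{2,2}$ step (not moving the difference quotient onto $f_n$) and in the $L^\infty$ step (writing the equation as $DF_n(\nabla u_n)=a(\nabla u_n)\nabla u_n+DF_n(0)$ and killing the constant term) is correct and makes explicit what the paper only gestures at.

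Two remarks on your treatment of $1<p<2$. First, the Stampacchia/De~Giorgi alternative for the $L^\infty$ estimate is a legitimate supplement: the frozen coefficients $a_{ij}(z)=\int_0^1 D^2_{ij}F_n(tz)\,dt$ do \emph{not} obey $\mu_1|z|^{p-2}|\xi|^2\le a_{ij}(z)\xi_i\xi_j$ uniformly in $n$ when $p<2$ (the left side blows up as $z\to 0$ while the right side is bounded by $m_0\ve_n^{p-2}$ there), so one indeed needs the weaker coercivity $DF_n(z)\cdot z\ge m_0|z|^p - C_0$ with $C_0$ controlled by $\ve_0$ only, as you write. Second, however, your proposed patch for verifying \cref{stricon} in the range $1<p<2$ --- replacing $F_n$ by $F_n(z)+\ve_n(\ve_n^2+|z|^2)$ --- does not close the gap. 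As $z,z'\to 0$ the right-hand side of \cref{stricon} behaves like $(|z|+|z'|)^{p-2}|z-z'|^2\to\infty\cdot|z-z'|^2$, while for any integrand that is $C^2$ with bounded Hessian near the origin (which both $F_n$ and $F_n+\ve_n(\ve_n^2+|z|^2)$ are, precisely because $\ve_n>0$) the left-hand side is $O(|z-z'|^2)$ near $z=z'=0$; so \cref{stricon} as stated cannot hold for \emph{any} such regularized integrand when $p<2$, whatever additive quadratic term you introduce. The paper itself asserts without comment that \cref{rhyp3} implies \cref{stricon}, which at face value is false for $p<2$ for the same reason; the resolution must lie in the original formulation of Bousquet--Brasco's hypothesis (which, as the paper notes, does not actually impose a growth condition and is more permissive than \cref{stricon} suggests), not in modifying $F_n$. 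You should either restrict the literal use of \cref{stricon} to $p\ge 2$ (where the inequality genuinely holds for $(\ve^2+|z|^2)^{p/2}$ with constant uniform in $\ve$), or cite the Bousquet--Brasco hypothesis directly for the sub-quadratic range, rather than trying to engineer \cref{stricon} by further regularization.
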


\begin{proof}
	Since $U_n\in C^2(\overline{B})$, we see that $U_n$ satisfies the bounded slope condition. Moreover, by~\cref{rhyp3}, $F_n$ satisfies the convexity condition~\cref{stricon}. Therefore, by~\cref{lipboubra}, we have the existence of a minimizer $u_{n}$ such that $u_{n}\in W^{1,\infty}(B)$. The condition~\cref{stricon} implies strict convexity of $F_n$ and hence, $u_{n}$ is a unique minimizer of $\mathcal{F}_{n}$ by~\cref{uniqueness}. 
	
	Let us denote the number $l_n:=||\nabla u_n||_{L^\infty(B)}<\infty$. Clearly the function $u_{n}$ satisfies the following Euler-Lagrange equation
	\begin{equation*}%\label{ELeq1}
		\nabla\cdot(\nabla F_{n}(\nabla u_{n}))=f_n	\txt{in} B.\end{equation*}
 By~\cref{rhyp3}, we have
\begin{align*}
	m_0\ve_n^{p-2}|\xi|^2\leq \langle D^2F_n(\nabla u_{n})\xi,\xi\rangle\leq M_0\left[(\ve_n^2+l_n^2)^{\frac{p-2}{2}}+(\ve_n^2+l_n^2)^{\frac{q-2}{2}}\right]|\xi|^2. 
\end{align*}Hence, by a standard argument involving difference quotients, we can prove that $u_{n}\in W^{2,2}(B)$. 

%Now, we may differentiate~\cref{ELeq1} to find an equation solved by $\partial_{x_k}u_{\sigma,\ve}$ for $k=1,2,\ldots,N$. Then, by De Giorgi-Nash-Moser Theorem, there exists $\gamma\in(0,1)$ such that $\nabla u_{\sigma,\ve}\in C^{0,\gamma}_{\loc}(B)$.
To prove the required estimate, we invoke the Maximum principle from \cref{maxprince} along with the lower bound from \cref{rhyp3} to conclude that
\begin{align*}
	\max_{B}|u_n|\leq \max_{\partial B}|u_n|+C||f||_{L^N(B)}= \max_{\partial B}|U_{n}|\leq \max_{2B}|U|+C||f||_{L^N(B)}.
\end{align*}
%Thus, we get $u_n \in L^{\infty}(B) \cap  W^{1,\infty}(B)$. 
Note that the $L^{\infty}$ estimate is uniform and independent of $n$ and  $\ve_n$.
\end{proof}

\begin{remark}
	Indeed, the $W^{1,\infty}\cap W^{2,2}$ estimates in~\cref{reglemma} depend on $n$ and $\ve_n$. However, in subsequent sections,  we only require the qualitative fact that $u_n\in W^{1,\infty}(B)\cap W^{2,2}(B)$ in order to rigorously justify all the calculations.
\end{remark}

\begin{remark}In subsequent sections excepting the final two sections, we shall suppress writing the subscript of $u_{n}, F_n, f_n$ for ease of notation. Moreover, in place of $\ve_n$, we will write $\ve$.\end{remark}

\section{Caccioppoli Inequality}\label{section3}

In this section, we derive a Caccioppoli inequality needed to apply De Giorgi iteration. Let us first define the following the notation:
\begin{equation}
	\label{eq4.1}
	v  := (\ve^2+|\nabla u|^2)^{p/2},\quad 
	g_1(t)  :=  (\ve^2+t^2)^{{(p-2)}/2} \txt{and}
	g_2(t)  :=  (\ve^2+t^2)^{{(p-2)}/2} + (\ve^2+t^2)^{{(q-2)}/2}.
\end{equation}

\begin{proposition}\label{caccioppoli2}
	Let $u$ be the solution to~\cref{regminim} and let us take $0<\rho<r<\infty$ such that the ball $B_r(x_0)\Subset B$, for some $x_0\in B$ (recall that $B$ is a fixed ball as defined in \cref{approxscheme}). Let us denote the Lipschitz bound by $L$, i.e., the following bound holds $||Du||_{L^\infty(B_r(x_0))}\leq L$. Then, for each $k\geq 0$, the following estimate holds:
	\begin{equation}\label{cacc2}
		\begin{array}{rcl}
		\int_B g_1(|\nabla u|)|\nabla^2 u|^2 (v-k)_{+}\eta^4\,dx+\int_B|\nabla (v-k)_{+}|^2\eta^4\,dx &\leq &  \frac{C}{(r-\rho)^2}\int_B \frac{g_2(|\nabla u|)}{g_1(|\nabla u|)}  (v-k)^2_{+} \eta^2\,dx \\
		&& +C L^2\int_B |f|^2 |\eta|^4\,dx,
		\end{array}
			\end{equation} for a constant $C$ independent of $L,k,\ve$ where $\ve$ is from \cref{eq4.1}. Without loss of generality, we have assumed that $\ve \leq L$.
\end{proposition}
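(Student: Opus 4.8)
The starting point is the Euler–Lagrange equation $\dv(\nabla F(\nabla u)) = f$ in $B$, which, since $u \in W^{2,2}(B) \cap W^{1,\infty}(B)$, can be differentiated: for each direction $x_s$, the function $\pa_s u$ is a weak solution of
\[
\int_B \langle D^2F(\nabla u)\,\nabla \pa_s u, \nabla \psi\rangle\,dx = -\int_B f\,\pa_s\psi\,dx
\]
for all $\psi \in W^{1,2}_0(B)$ with bounded support (this requires a difference-quotient justification, which is where the qualitative $W^{2,2}\cap W^{1,\infty}$ regularity from \cref{reglemma} is used). The natural test function adapted to the quantity $v = (\ve^2+|\nabla u|^2)^{p/2}$ is $\psi = \pa_s u\,(v-k)_+\,\eta^4$, summed over $s$. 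I would carry out this substitution, noting that $\sum_s \nabla(\pa_s u)\cdot \nabla(\pa_s u) = |\nabla^2 u|^2$ and that $\nabla v = p\,(\ve^2+|\nabla u|^2)^{(p-2)/2}\,(\nabla^2 u)\,\nabla u$, so the terms where the gradient lands on $(v-k)_+$ reconstruct $|\nabla(v-k)_+|^2$ up to ellipticity constants. The ellipticity lower bound $m_0(\ve^2+|z|^2)^{(p-2)/2}|\xi|^2 \le \langle D^2F(z)\xi,\xi\rangle$ gives the two good terms on the left of \cref{cacc2} (with $g_1$), while the upper bound $\le M_0 g_2$ controls the cross terms.

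**Organizing the terms.** After integration the identity reads schematically
\[
\text{(good terms)} = -\,4\sum_s\int_B \langle D^2F(\nabla u)\nabla\pa_s u,\nabla\eta\rangle\,\pa_s u\,(v-k)_+\,\eta^3\,dx
\;-\;\sum_s\int_B f\,\pa_s\!\big(\pa_s u\,(v-k)_+\,\eta^4\big)\,dx.
\]
The first (interior-cutoff) term is handled by Cauchy–Schwarz with a small parameter: $|\nabla\pa_s u|\,|\pa_s u|$ is absorbed into $\int g_1 |\nabla^2 u|^2 (v-k)_+\eta^4$ after writing $|\pa_s u|^2\,g_1 \approx g_1(\ve^2+|\nabla u|^2) \le g_2(\ve^2+|\nabla u|^2)$, leaving a term of the form $\frac{C}{(r-\rho)^2}\int \frac{g_2}{g_1}(v-k)_+^2\eta^2$ (using $|\nabla\eta|\le C/(r-\rho)$), which is exactly the first term on the right of \cref{cacc2}. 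The $f$-term is where the Lipschitz bound $L$ enters: expanding the derivative produces $f\,\pa_{ss}u\,(v-k)_+\eta^4$, $f\,\pa_s u\,\pa_s(v-k)_+\eta^4$, and $f\,\pa_s u\,(v-k)_+\eta^3\pa_s\eta$. Each is estimated by Young's inequality so that the factors $|\nabla^2 u|(v-k)_+^{1/2}$ and $|\nabla(v-k)_+|$ are absorbed into the good terms, and the leftover always carries at most $|\nabla u|^2 \le L^2$ times $|f|^2$; the worst leftover is $\int L^2 |f|^2 \eta^4$, matching the second term on the right of \cref{cacc2}. One has to be a little careful to bound $(v-k)_+ \le v \lesssim L^p$ and $|\pa_s u| \le L$ so that all the surviving $f$-terms collapse to a constant multiple of $L^2\int|f|^2\eta^4$; constants are allowed to depend on $p$ but not on $L,k,\ve$, and the assumption $\ve\le L$ lets one write $(\ve^2+|\nabla u|^2)^{p/2}\lesssim L^p$ cleanly.

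**Main obstacle.** The delicate point is bookkeeping in the $f$-term: naively, $f\,\pa_{ss}u\,(v-k)_+\eta^4$ pairs a second derivative with weight $(v-k)_+$, and to absorb it into $\int g_1|\nabla^2 u|^2(v-k)_+\eta^4$ by Young's inequality one needs the complementary factor to be $g_1^{-1}|f|^2(v-k)_+\eta^4$; since $g_1^{-1}(v-k)_+ = (\ve^2+|\nabla u|^2)^{(2-p)/2}(\ve^2+|\nabla u|^2)^{p/2} = (\ve^2+|\nabla u|^2) \le 2L^2$, this is bounded by $2L^2|f|^2\eta^4$, so it works — but the corresponding step when $p<2$ versus $p\ge 2$ needs the uniform Lipschitz bound rather than any $\ve$-dependent quantity, and one must check that no term secretly requires $\int g_2 |\nabla^2u|^2(v-k)_+$ (which is \emph{not} on the left-hand side). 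Tracking this — i.e. verifying that every second-derivative term that appears is genuinely controlled by the $g_1$-weighted integral and never needs the $g_2$-weight — is the real content; the rest is a routine Caccioppoli computation. I would finish by collecting constants and noting the estimate is uniform in $\ve$ under $\ve\le L$.
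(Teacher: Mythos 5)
Your proposal is correct and follows essentially the same route as the paper: the same test function $\psi = u_{x_j}(v-k)_+\eta^4$ in the differentiated Euler--Lagrange equation, the same identification $\nabla (v-k)_+ = p\,g_1(|\nabla u|)\sum_j u_{x_j}\nabla u_{x_j}$ to reconstruct $|\nabla(v-k)_+|^2$, Cauchy--Schwarz/Young absorption of the cutoff term into the $g_2/g_1$-weighted right-hand side, and the same key observation $g_1(|\nabla u|)^{-1}(v-k)_+ \le \ve^2+|\nabla u|^2 \le 2L^2$ to reduce all the $f$-terms to $CL^2\int |f|^2\eta^4$.
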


\begin{proof}
	The minimizer $u$ of~\cref{regminim} satisfies the following Euler-Lagrange equation:
	\begin{align*}
		\int_B\langle DF(\nabla u),\nabla \phi\rangle\,dx=\int_B f\phi\,dx.
	\end{align*} By choosing $\phi=\psi_{x_j}\in H^1_0(B)$ and integrating by parts, we get
	\begin{align*}
	\int_B\langle D^2f_{\sigma}(\nabla u)\nabla u_{x_j},\nabla \psi\rangle\,dx=-\int_B f\psi_{x_j}\,dx.
	\end{align*} Now, we choose $\psi=u_{x_j}(v-k)_{+}\eta^4$, where $\eta\in C_0^\infty(B)$ such that $0\leq \eta\leq 1$ in $B_r(x_0)$,  $\eta\equiv1$ in $B_{\rho}(x_0)$ and $|\nabla \eta|\leq C/(r-\rho)$ and after summing over $j\in\{1,2,\ldots,N\}$, we get
	\begin{align*}%\label{step6}
		\underbrace{\sum_{j=1}^N\int_B\left\langle D^2F(\nabla u)\nabla u_{x_j},\nabla u_{x_j}\right\rangle (v-k)_{+} \eta^4\,dx}_{I}&+\underbrace{\sum_{j=1}^N\int_B\left\langle D^2F(\nabla u)\nabla u_{x_j},\nabla (v-k)_{+}\right\rangle u_{x_j}\eta^4\,dx}_{II}\nonumber\\
		&=\underbrace{-4\sum_{j=1}^N\int_B\left\langle D^2F(\nabla u)\nabla u_{x_j},\nabla \eta\right\rangle u_{x_j}\eta^3 (v-k)_{+}\,dx}_{III}-\underbrace{\sum_{j=1}^N\int_B f\psi_{x_j}\,dx}_{IV}.
	\end{align*}
	We substitute $\nabla (v-k)_{+} =\nabla v= p\,g_1(|\nabla u|)\sum_{j=1}^N u_{x_j}\nabla u_{x_j}$, which holds whenever $v>k$, in $II$ and $III$ to obtain:
	\begin{align}\label{step6andhalf}
		\sum_{j=1}^N&\int_B\left\langle D^2F(\nabla u)\nabla u_{x_j},\nabla u_{x_j}\right\rangle (v-k)_{+} \eta^4\,dx+{\int_B\frac{1}{p\, g_1(|\nabla u|)}\left\langle D^2F(\nabla u)\nabla (v-k)_{+}
		,\nabla (v-k)_{+}\right\rangle\eta^4\,dx}\nonumber\\
		&={-4\int_B\frac{1}{p\,g_1(|\nabla u|)}\left\langle D^2F(\nabla u)\nabla (v-k)_{+},\nabla \eta\right\rangle\eta^3 (v-k)_{+}\,dx}-{\sum_{j=1}^N\int_B f\psi_{x_j}\,dx}.
	\end{align}
	Now observe that due to the coercivity of $D^2f_{\sigma}(z)$, we have
	\begin{align*}
		\langle D^2F(\nabla u)\nabla u_{x_j},\nabla \eta\rangle\leq\langle D^2F(\nabla u)\nabla u_{x_j},\nabla u_{x_j}\rangle^{1/2}\langle D^2F(\nabla u)\nabla \eta,\nabla \eta\rangle^{1/2},
	\end{align*} which follows from a Cauchy-Schwartz inequality $\langle Ax,y\rangle\leq \langle Ax,x\rangle^{1/2}\langle Ay,y\rangle^{1/2}$. 
	
	As a consequence, by an application of Young's inequality, for the right hand side (subsequently RHS), we obtain
	\begin{equation}\label{step7}
		III\leq \frac{1}{2}\int_B\frac{1}{p\, g_1(|\nabla u|)}\left\langle D^2F(\nabla u)\nabla (v-k)_{+},\nabla (v-k)_{+}\right\rangle \eta^4\,dx+2\int_B\frac{1}{p\, g_1(|\nabla u|)}\langle D^2F(\nabla u)\nabla \eta,\nabla\eta\rangle (v-k)_{+}^2\eta^2\,dx.
	\end{equation} 
	
	Substituting~\cref{step7} in~\cref{step6andhalf}, we get
	\begin{align*}
		{\sum_{j=1}^N\int_B\left\langle D^2F(\nabla u)\nabla u_{x_j},\nabla u_{x_j}\right\rangle (v-k)_{+} \eta^4\,dx}&+{\int_B\frac{1}{2p\, g_1(|\nabla u|)}\left\langle D^2F(\nabla u)\nabla (v-k)_{+},\nabla (v-k)_{+}\right\rangle\eta^4\,dx}\nonumber\\
		&\leq{2\int_B\frac{1}{p\,g_1(|\nabla u|)}\left\langle D^2F(\nabla u)\nabla \eta,\nabla \eta\right\rangle (v-k)_{+}^{2}\eta^2\,dx}-{\sum_{j=1}^N\int_B f\psi_{x_j}\,dx}.
	\end{align*}
	
	Now, we shall apply~\cref{rhyp3} and estimate from below to get
	\begin{align}
		m_0\int_B g_1(|\nabla u|)|\nabla^2 u|^2 (v-k)_{+}\eta^4\,dx&+\frac{m_0}{2p}\int_B|\nabla (v-k)_{+}|^2 \eta^4\,dx\nonumber\\
		&\leq \frac{2M_0}{p}\int_B \frac{g_2(|\nabla u|)}{g_1(|\nabla u|)} (v-k)_{+}^{2} |\nabla \eta|^2\eta^2\,dx-{\sum_{j=1}^N\int_B f\psi_{x_j}\,dx}\label{step8}.
	\end{align}
	
	For suitably chosen $\sigma_1,\sigma_2\in\RR$, the term involving $f$ is estimates as follows:
	\begin{align}
		{\sum_{j=1}^N\int_B f\psi_{x_j}\,dx}\leq &\sigma_1\int_B g_1(|\nabla u|)|\nabla^2 u|^2\,(v-k)_{+}\eta^4\,dx+\sigma_2\int_B |\nabla (v-k)_{+}|^2\eta^4\,dx\nonumber\\
		&+C\int_B (v-k)_{+}^{2}|\nabla \eta|^2\eta^2\,dx\nonumber\\
		&+C\int_B|f|^2\left\{ [g_1(|\nabla u|)]^{-1} (v-k)_{+}+|\nabla u|^2 \right\}\eta^4\,dx.\label{step9}
	\end{align}
	Combining~\cref{step8} and~\cref{step9}, we obtain
		\begin{align*}
		\frac{m_0}{2}\int_B g_1(|\nabla u|)|\nabla^2 u|^2 (v-k)_{+}\eta^4\,dx&+\frac{m_0}{4p}\int_B|\nabla (v-k)_{+}|^2\eta^4\,dx\nonumber\\
		&\leq C\int_B \frac{g_2(|\nabla u|)}{g_1(|\nabla u|)} (v-k)_{+}^{2} |\nabla \eta|^2\eta^2\,dx+C \int_B |f|^2 (\ve^2 +|\nabla u|^2)\eta^4\,dx,%\label{step10}
	\end{align*}
	where we have used the facts that $\dfrac{g_2(|\nabla u|)}{g_1(|\nabla u|)}\geq 1$ and $g_1(|\nabla u|)^{-1}(v-k)_{+}\leq C(\ve^2 + |\nabla u|^2)$.
	This finishes the proof of the proposition.
\end{proof}
%\hrule \hrule \hrule \hrule \hrule 
\section{Improvement through integration by parts}\label{section4}
In this section, we will apply integration by parts to the RHS of~\cref{caccioppoli2} to obtain an improved Caccioppoli inequality.

\begin{proposition}\label{caccioppoli3}
	Let $u$ be the solution to~\cref{regminim}. Let $0<\rho<r<\infty$ such that the ball $B_r(x_0)\Subset B$, for $x_0\in B$. Let $L$ such that $||Du||_{L^\infty(B_r(x_0))}\leq L$,  then, for each $k\geq 0$, it holds that
	\begin{align}\label{cacc3}
		\int_{B_{\rho}(x_0)}|\nabla (v-k)_{+}|^2 \eta^4\,dx\leq \frac{C}{(r-\rho)^4}\left(\int_{B_r(x_0)} v^{\al} \chi_{\{v>k\}}\,dx+1\right)+C L^2\int_{B_r(x_0)} |f|^2\,dx,
			\end{align} for a constant $C$ depending on $||U||_{L^\infty(2B)},||f||_{L^N(B)}$ but independent of $L,k,\ve$. Here we have taken $\al$ satisfying 
		\begin{equation}\label{eq5.7}
			\al \geq \left\{\frac{q+p-2}{p},\frac{2(q-1)}{p}\right\}.
		\end{equation}
\end{proposition}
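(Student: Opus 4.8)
The plan is to estimate the right‑hand side of the Caccioppoli inequality \cref{caccioppoli2}, namely the term $\frac{C}{(r-\rho)^2}\int_B \frac{g_2(|\nabla u|)}{g_1(|\nabla u|)}(v-k)_+^2\eta^2\,dx$, by one integration by parts and then to reabsorb the second‑order terms that this produces into the left‑hand side of \cref{caccioppoli2}. The whole gain rests on the boundedness of the minimizer: by \cref{reglemma} one has $\|u\|_{L^\infty(B)}\le \|U\|_{L^\infty(2B)}+C\|f\|_{L^N(B)}=:C_0$, and it is this $C_0$ that will take the place of the ``missing'' power of $\nabla u$ after the integration by parts. Since $g_2/g_1 = 1+(\ve^2+|\nabla u|^2)^{(q-p)/2}$, the integrand is, on $\{v>k\}$, a genuine function of $|\nabla u|$. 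The crucial preliminary reduction — the one responsible for the exponent in \cref{eq5.7} rather than for something larger — is to write $(v-k)_+^2\le v\,(v-k)_+$ on $\{v>k\}$ together with $\frac{g_2(|\nabla u|)}{g_1(|\nabla u|)}\,v=(\ve^2+|\nabla u|^2)^{p/2}+(\ve^2+|\nabla u|^2)^{q/2}$, so that it suffices to bound $\int_B (\ve^2+|\nabla u|^2)^{s/2}(v-k)_+\eta^2\,dx$ for $s\in\{p,q\}$.

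For each such $s$ I would integrate by parts with the vector field $\vec{W}:=(\ve^2+|\nabla u|^2)^{s/2-1}(v-k)_+\,\nabla u\,\eta^2$; using the denominator $\ve^2+|\nabla u|^2$ instead of $|\nabla u|^2$ keeps $\vec W\in W^{1,1}_0(B)$ and removes the degeneracy on $\{\nabla u=0\}$, and here one uses $u\in W^{1,\infty}(B)\cap W^{2,2}(B)$ from \cref{reglemma}. Writing $1=\frac{\ve^2+|\nabla u|^2}{\ve^2+|\nabla u|^2}$ one obtains, with no boundary term since $\eta$ is compactly supported,
\[
\int_B (\ve^2+|\nabla u|^2)^{s/2}(v-k)_+\eta^2\,dx=-\int_B u\operatorname{div}\vec{W}\,dx+\ve^2\!\int_B (\ve^2+|\nabla u|^2)^{s/2-1}(v-k)_+\eta^2\,dx .
\]
Expanding $\operatorname{div}\vec W$ and using $|u|\le C_0$, three types of terms survive: (a) Hessian terms $\lesssim C_0(\ve^2+|\nabla u|^2)^{s/2-1}(v-k)_+|\nabla^2u|\eta^2$ (from differentiating the coefficient and from $\Delta u$); (b) a term $\lesssim C_0(\ve^2+|\nabla u|^2)^{(s-1)/2}|\nabla(v-k)_+|\eta^2$, using $|\nabla u\cdot\nabla(v-k)_+|\le|\nabla u|\,|\nabla(v-k)_+|$; and (c) a cut‑off term $\lesssim C_0(\ve^2+|\nabla u|^2)^{(s-1)/2}(v-k)_+|\nabla\eta|\eta$. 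I would then apply Young's inequality, \emph{routing each term to a different piece of the left side of \cref{caccioppoli2}}: from (a) split off $g_1(|\nabla u|)^{1/2}(v-k)_+^{1/2}|\nabla^2u|\eta^2$, whose square is exactly $g_1(|\nabla u|)|\nabla^2u|^2(v-k)_+\eta^4$, the complementary factor squared being $C_0^2(\ve^2+|\nabla u|^2)^{(2s-p-2)/2}(v-k)_+\le C_0^2 v^{2(s-1)/p}\chi_{\{v>k\}}$ after the elementary bound $(v-k)_+\le v$; from (b) split off $|\nabla(v-k)_+|\eta^2$, whose square is $|\nabla(v-k)_+|^2\eta^4$, with complementary factor again $\lesssim C_0^2 v^{2(s-1)/p}\chi_{\{v>k\}}$; from (c) split off $(\ve^2+|\nabla u|^2)^{(s-p)/4}(v-k)_+\eta$, whose square is $\le \frac{g_2(|\nabla u|)}{g_1(|\nabla u|)}(v-k)_+^2\eta^2$ and hence is absorbed (with a small fixed constant) by the very term being estimated, the complementary factor squared being $\lesssim \frac{C_0^2}{(r-\rho)^2}v^{(s+p-2)/p}\chi_{\{v>k\}}$. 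The residual $\ve^2$–integral is controlled by $\ve^2\int_{B_r}(\ve^2+|\nabla u|^2)^{(s+p-2)/2}\chi_{\{v>k\}}\lesssim \int_{B_r}v^{(s+p-2)/p}\chi_{\{v>k\}}+1$, using $\ve<1$.

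Taking $s\in\{p,q\}$ and bounding every power $v^{2(s-1)/p},\,v^{(s+p-2)/p}$ by $v^\alpha+1$ for any $\alpha$ obeying \cref{eq5.7} (indeed $\alpha\ge\tfrac{2(q-1)}{p}$ dominates all of these), one arrives, after absorbing the small fixed multiple of $\int_B \frac{g_2}{g_1}(v-k)_+^2\eta^2$ into itself, at an inequality of the form
\[
\int_B \tfrac{g_2(|\nabla u|)}{g_1(|\nabla u|)}(v-k)_+^2\eta^2\,dx\le \delta\Big(\textstyle\int_B g_1(|\nabla u|)|\nabla^2u|^2(v-k)_+\eta^4+\int_B |\nabla(v-k)_+|^2\eta^4\Big)+\tfrac{C}{(r-\rho)^2}\Big(\textstyle\int_{B_r} v^\alpha\chi_{\{v>k\}}+1\Big),
\]
with $C=C(C_0,N,p,\delta^{-1})$. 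Feeding this back into \cref{caccioppoli2}, choosing $\delta\sim(r-\rho)^2$ so that the induced multiple $\frac{C}{(r-\rho)^2}\delta$ of the two left‑hand terms of \cref{caccioppoli2} can be reabsorbed, and using $\eta\equiv1$ on $B_\rho(x_0)$, $\eta\le1$, yields exactly \cref{cacc3}; the jump from $(r-\rho)^{-2}$ in \cref{caccioppoli2} to $(r-\rho)^{-4}$ here is precisely the combined effect of that prefactor with the Young constant $\delta^{-1}\sim(r-\rho)^{-2}$ and the $|\nabla\eta|^2\lesssim(r-\rho)^{-2}$ in term (c), and the $f$–term carries over unchanged since $|\eta|\le1$.

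The main obstacle is making all four Young splittings clean, i.e.\ ensuring every complementary factor is a genuine power of $v$ times $\chi_{\{v>k\}}$ and not something that degenerates where $v\approx k$. This is what forces the preliminary step $(v-k)_+^2\le v(v-k)_+$ (keeping the full square would leave an extra factor of $v$ and push $\alpha$ up by $2$), what forces routing the term (b) into $\int|\nabla(v-k)_+|^2\eta^4$ rather than into the Hessian integral, and what forces the use of $(v-k)_+\le v$ instead of any negative power of $(v-k)_+$. The simultaneous degeneracy of $v$, $g_1$, $g_2$ and $\vec W$ at $\{\nabla u=0\}$ is dealt with once and for all by building $\vec W$ with the non‑degenerate weight $\ve^2+|\nabla u|^2$ and by the bound $\ve<1$, which renders the residual $\ve^2$–term harmless.
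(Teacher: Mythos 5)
Your proposal is correct and follows essentially the same route as the paper: the same integration by parts exploiting $\|u\|_{L^\infty}$, the same Young splittings routing the Hessian and $\nabla(v-k)_+$ terms back into the left side of \cref{caccioppoli2} with $\delta\sim(r-\rho)^2$ (whence the $(r-\rho)^{-4}$), and the same exponents $\tfrac{2(q-1)}{p}$ and $\tfrac{q+p-2}{p}$ emerging from the complementary factors. The only (harmless) difference is organizational: you integrate by parts for both $s=p$ and $s=q$ after writing $(v-k)_+^2\le v(v-k)_+$ throughout, whereas the paper peels off the ``$1$'' in $g_2/g_1$ and the $\ve^2$-part first and integrates by parts only on the $q$-homogeneous piece.
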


\begin{proof}
	From \cref{cacc2}, we have
 \begin{equation}\label{cacc23}
 	\begin{array}{rcl}
 		\int_B g_1(|\nabla u|)|\nabla^2 u|^2 (v-k)_{+}\eta^4\,dx+\int_B|\nabla (v-k)_{+}|^2\eta^4\,dx &\leq &  \frac{C}{(r-\rho)^2}\int_B \frac{g_2(|\nabla u|)}{g_1(|\nabla u|)}  (v-k)^2_{+} \eta^2\,dx \\
 		&& +C L^2\int_B |f|^2 |\eta|^4\,dx,
 	\end{array}
 \end{equation}
We estimate the first term appearing on the right hand side of \cref{cacc23} by 
	\begin{align}\label{est1}
		\frac{1}{(r-\rho)^2}\int_B \frac{g_2(|\nabla u|)}{g_1(|\nabla u|)}  (v-k)^2_{+} \eta^2\,dx&=\frac{1}{(r-\rho)^2}\int_B (1+(\ve^2+|\nabla u|^2)^{\frac{q-p}{2}})  (v-k)^2_{+} \eta^2\,dx\nonumber\\
		&=\frac{1}{(r-\rho)^2}\int_B (v-k)^2_{+} \eta^2\,dx+\frac{1}{(r-\rho)^2}\int_B (\ve^2+|\nabla u|^2)^{\frac{q-p}{2}} (v-k)^2_{+} \eta^2\,dx.
	\end{align}
	The first term on the right hand side of  \cref{est1}  is in the correct form and does not require further analysis. For the second term on the right hand side of \cref{est1}, we expand to get
	\begin{align}\label{est2}
		\frac{1}{(r-\rho)^2}\int_B (\ve^2+|\nabla u|^2)^{\frac{q-p}{2}} (v-k)^2_{+} \eta^2\,dx=&\frac{1}{(r-\rho)^2}\int_B \ve^2(\ve^2+|\nabla u|^2)^{\frac{q-p-2}{2}} (v-k)^2_{+} \eta^2\,dx\nonumber\\
		&+\frac{1}{(r-\rho)^2}\int_B |\nabla u|^2(\ve^2+|\nabla u|^2)^{\frac{q-p-2}{2}} (v-k)^2_{+} \eta^2\,dx
	\end{align}

	The first term in~\cref{est2} is estimated as 
	\begin{equation*}%\label{est3}
		\begin{array}{rcl}
		\frac{1}{(r-\rho)^2}\int_B \ve^2(\ve^2+|\nabla u|^2)^{\frac{q-p-2}{2}} (v-k)^2_{+} \eta^2\,dx&\leq & \frac{1}{(r-\rho)^2}\int_B \ve^2(\ve^2+|\nabla u|^2)^{\frac{q-p-2}{2}} v^2\,\chi_{\{v>k\}} \eta^2\,dx\\
		&=&\frac{1}{(r-\rho)^2}\int_B \ve^2(\ve^2+|\nabla u|^2)^{\frac{q+p-2}{2}}\,\chi_{\{v>k\}} \eta^2\,dx\\
		&\leq & \frac{C}{(r-\rho)^2}\left(\int_B v^{\al}\chi_{\{v>k\}} + 1\,dx\right),
%		&\leq \frac{C}{(r-\rho)^2}\left(\int_B (\ve^2+|\nabla u|^2)^{p}\chi_{\{v>k\}}\,dx+1\right)\nonumber\\
%		&=\frac{C}{(r-\rho)^2}\left(\int_B v^2\,\chi_{\{v>k\}}\,dx+1\right),
\end{array}	\end{equation*} where we have used H\"older inequality with exponent $\frac{p\al}{q+p-2} \geq 1$ using the choice of $\al$ from \cref{eq5.7}.
	
	The second term in~\cref{est2} is estimated by using integration by parts as follows:
	\begin{equation}\label{est4}
		\begin{array}{rcl}
		\int_B |\nabla u|^2(\ve^2+|\nabla u|^2)^{\frac{q-p-2}{2}} (v-k)^2_{+} \eta^2\,dx &\leq & \int_B |\nabla u|^2(\ve^2+|\nabla u|^2)^{\frac{q-2}{2}} (v-k)_{+} \eta^2\,dx\\
		&= & \int_B \iprod{\nabla u}{\nabla u}(\ve^2+|\nabla u|^2)^{\frac{q-2}{2}} (v-k)_{+} \eta^2\,dx\\
		&\leq & C||u||_{L^\infty(B)}\biggl( \underbrace{\int_B |D^2 u|(\ve^2+|\nabla u|^2)^{\frac{q-2}{2}} (v-k)_{+} \eta^2\,dx}_{I} \\
		&&\qquad +\underbrace{\int_B |\nabla u|^2(\ve^2+|\nabla u|^2)^{\frac{q-4}{2}}|D^2 u| (v-k)_{+} \eta^2\,dx}_{II}\\
		&&\qquad+\underbrace{\int_B |\nabla u|(\ve^2+|\nabla u|^2)^{\frac{q-2}{2}} \nabla((v-k)_{+}) \eta^2\,dx}_{III}\\
		&& \qquad +\underbrace{\int_B |\nabla u|(\ve^2+|\nabla u|^2)^{\frac{q-2}{2}} (v-k)_{+} \eta |\nabla\eta|\,dx}_{IV} \biggr).
		\end{array}
	\end{equation}

The terms in~\cref{est4} containing the Hessian will be estimated using Young's inequality and absorbed to the left hand side. For $\tau>0$ to be chosen, we have
\begin{equation}\label{estI}\begin{array}{rcl}
	I&\leq &\tau(r-\rho)^2\int_B |D^2 u|^2(\ve^2+|\nabla u|^2)^{\frac{p-2}{2}} (v-k)_{+} \eta^4\,dx + \frac{C}{\tau (r-\rho)^2}\int_B (\ve^2+|\nabla u|^2)^{\frac{2q-p-2}{2}} (v-k)_{+} \,dx\\
	&\leq &\tau(r-\rho)^2\int_B |D^2 u|^2(\ve^2+|\nabla u|^2)^{\frac{p-2}{2}} (v-k)_{+} \eta^4\,dx + \frac{C}{\tau (r-\rho)^2}\int_B (\ve^2+|\nabla u|^2)^{\frac{2q-2}{2}}\,\chi_{\{v>k\}}\,dx\\
	&\leq & \tau(r-\rho)^2\int_B |D^2 u|^2(\ve^2+|\nabla u|^2)^{\frac{p-2}{2}} (v-k)_{+} \eta^4\,dx + \frac{C}{\tau (r-\rho)^2}\left(\int_B v^{\al}\,\chi_{\{v>k\}} + 1\,dx\right),
\end{array}\end{equation}
where we use H\"older inequality and \cref{eq5.7}

Since $II\leq I$, it is majorized in the same way as \cref{estI}.

For $III$, we proceed analogously to get
\begin{equation*}%\label{estIII}
	\begin{array}{rcl}
	III &\leq & \tau(r-\rho)^2\int_B |\nabla (v-k)_{+}|^2 \eta^4\,dx + \frac{C}{\tau (r-\rho)^2}\int_B |\nabla u|^2(\ve^2+|\nabla u|^2)^{q-2} \,\chi_{\{v>k\}}\,dx\\
	&\leq & \tau(r-\rho)^2\int_B |\nabla (v-k)_{+}|^2 \eta^4\,dx + \frac{C}{\tau (r-\rho)^2}\int_B (\ve^2+|\nabla u|^2)^{q-1} \,\chi_{\{v>k\}}\,dx\\
%	&\leq &\tau\int_B |\nabla (v-k)_{+}|^2 \eta^4\,dx + \frac{C}{\tau (r-\rho)^4}\left(\int_B (\ve^2+|\nabla u|^2)^{p} \,\chi_{\{v>k\}}\,dx+1\right)\\
	&\leq& \tau(r-\rho)^2\int_B |\nabla (v-k)_{+}|^2 \eta^4\,dx + \frac{C}{\tau (r-\rho)^2}\left(\int_B v^{\al} \,\chi_{\{v>k\}}+1\,dx\right),\end{array}
\end{equation*}
where we use H\"older inequality noting the restriction \cref{eq5.7}.
%\hrule\hrule\hrule\hrule\hrule

For $IV$, we again apply Young's inequality to get
\begin{equation*}%\label{estIV}
	\begin{array}{rcl}
	IV &\leq&  \tau (r-\rho)^2 \int_B |\nabla u|^2 (\ve^2 + |\nabla u|^2)^{\frac{q-p-2}{2}} (v-k)_+^2 \eta^2 \ dx + \frac{1}{\tau(r-\rho)^2}\int_B (\ve^2+|\nabla u|^2)^{\frac{q-2+p}{2}} \,\chi_{\{v>k\}}\,dx\\
%	&\leq& \frac{C}{(r-\rho)^2}\left (\int_B (\ve^2+|\nabla u|^2)^{p} \,\chi_{\{v>k\}}\,dx+1\right)\nonumber\\
& \leq & \tau (r-\rho)^2 \int_B |\nabla u|^2 (\ve^2 + |\nabla u|^2)^{\frac{q-p-2}{2}} (v-k)_+^2 \eta^2 \ dx + \frac{1}{\tau(r-\rho)^2}\left(\int_B v^\alpha \,\chi_{\{v>k\}}+1\,dx\right),
%	&\leq & \frac{C}{(r-\rho)^2}\left(\int_B v^2 \,\chi_{\{v>k\}}\,dx +1\right),
\end{array}\end{equation*}
where we use Young's inequality along with \cref{eq5.7}.

Now, we choose $\tau$  small enough to absorb terms to the left hand side and combine the estimates to complete the proof of the lemma.
\end{proof}

%\hrule \hrule \hrule \hrule \hrule 
\section{De Giorgi Iteration}\label{section5}

In this section, we will prove a preliminary Lipschitz bound for the regularized minimizers where the constants do not depend on the regularizing parameter. This is a standard De Giorgi iteration method adapted to the setting of Potential estimates.

\begin{proposition}\label{lipboundone}
	Let $u$ be the solution to~\cref{regminim} and $B_{R_0}(x_0)\Subset B$ be a ball. Let $L$ be such that $||Du||_{L^\infty(B_{R_0}(x_0))}\leq L$ and further suppose that $\al \in \lbr[[] 2, \frac{2N}{N-2}\rbr[)]$. Then the following estimate holds
	\begin{align*}%\label{cacc4}
		||v||_{L^\infty(B_{R_0/2}(x_0))}\leq C\left(\fint_{B_{R_0}(x_0)} v^\alpha \,dx+1\right)^{\frac{1}{\alpha\left(1-\frac N2 +\frac{N}{\alpha}\right)}}+C L\,\left(\mathbf{P}_1^f(x_0,2R_0)\right)^{\frac{1}{\left(1-\frac N2 +\frac{N}{\alpha}\right)}},
			\end{align*} for a constant $C$ depending on $R_0$, $N$, $p$, $||U||_{L^\infty(2B)},||f||_{L^N(B)}$, $M$ and $m$ but independent of $L,\ve$.
\end{proposition}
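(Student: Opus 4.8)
The plan is to run a De Giorgi iteration on the superlevel sets of $v$, taking the improved Caccioppoli inequality \cref{cacc3} as the basic energy estimate. Fix the center $x_0$ and abbreviate $B_\rho=B_\rho(x_0)$. I would introduce the shrinking radii $\rho_j:=\tfrac{R_0}{2}(1+2^{-j})$, so that $\rho_0=R_0$ and $\rho_j\downarrow R_0/2$, intermediate radii $\sigma_j\in(\rho_{j+1},\rho_j)$, and cutoffs $\eta_j\in C_c^\infty(B_{\sigma_j})$ with $\eta_j\equiv 1$ on $B_{\rho_{j+1}}$ and $|\nabla\eta_j|\lesssim 2^j/R_0$. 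For a height parameter $\kappa\ge 1$ to be fixed at the end, I define an increasing sequence of levels $k_j$ with $k_0=\kappa$; crucially, since the forcing term $L^2\int_{B_{\rho_j}}|f|^2$ in \cref{cacc3} does not vanish as $\rho_j\to R_0/2$, the increment $k_{j+1}-k_j$ at stage $j$ is taken to be a fixed multiple (with the power appropriate to $N$ and $\alpha$) of $L\,\big(\rho_j^2\fint_{B_{\rho_j}}|f|^2\big)^{1/2}$ — this is the mechanism of Kuusi--Mingione \cite{kuusiPotentialEstimatesGradient2012,beckLipschitzBoundsNonuniform2020}, and summing it over $j$ produces exactly $\mathbf{P}_1^f(x_0,2R_0)$, so that $k_\infty\lesssim\kappa+L\,\mathbf{P}_1^f(x_0,2R_0)^{1/(1-N/2+N/\alpha)}$. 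It then suffices to prove $\int_{B_{\rho_j}}(v-k_j)_+^2\,dx\to 0$, which forces $v\le k_\infty$ a.e.\ on $B_{R_0/2}$.

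To build the recursion I would apply \cref{cacc3} with $(\rho,r)=(\sigma_j,\rho_j)$, level $k_{j+1}$ and cutoff $\eta_j$, and combine it with the Sobolev inequality on $B_{\sigma_j}$ (here $N\ge 3$ and $2^\ast=2N/(N-2)$) applied to $(v-k_{j+1})_+\eta_j^2$; this bounds $\|(v-k_{j+1})_+\|_{L^{2^\ast}(B_{\rho_{j+1}})}$ by the right-hand side of \cref{cacc3} plus the tail $\tfrac{2^{2j}}{R_0^2}\int_{B_{\rho_j}}(v-k_j)_+^2$. The right-hand side of \cref{cacc3} contains $\int_{B_{\rho_j}}v^\alpha\chi_{\{v>k_{j+1}\}}$ together with an additive constant, which one keeps attached to $\chi_{\{v>k_{j+1}\}}$ — equivalently one works throughout with $v+1\ge1$ in place of $v$. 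On $\{v>k_{j+1}\}$ I write $v=(v-k_j)_++k_j$ to get $\int_{B_{\rho_j}}v^\alpha\chi_{\{v>k_{j+1}\}}\lesssim\int_{B_{\rho_j}}(v-k_j)_+^\alpha+k_j^\alpha\,|\{v>k_{j+1}\}\cap B_{\rho_j}|$, control $\int_{B_{\rho_j}}(v-k_j)_+^\alpha$ by H\"older with exponent $2^\ast/\alpha>1$ — this is exactly where the standing hypothesis $\alpha<2N/(N-2)$ enters — producing $\|(v-k_j)_+\|_{L^{2^\ast}(B_{\rho_j})}^\alpha$ times a positive power of a level-set measure, and estimate every level-set measure by Chebyshev, $|\{v>k_{j+1}\}\cap B_{\rho_j}|\le(k_{j+1}-k_j)^{-2}\int_{B_{\rho_j}}(v-k_j)_+^2$. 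Feeding this back and reabsorbing the resulting superlinear occurrences of the $L^{2^\ast}$-norm yields a closed recursion
\begin{equation*}
	Y_{j+1}\le \frac{C\,b^{\,j}}{R_0^{\,c}\,\kappa^{\,d}}\,Y_j^{\,1+\beta},\qquad Y_j:=\int_{B_{\rho_j}}(v-k_j)_+^2\,dx ,
\end{equation*}
valid once $\kappa$ dominates $\fint_{B_{R_0}}v^\alpha$, with $b>1$ and $\beta=\beta(N,\alpha)>0$ explicit.

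By the standard fast geometric convergence lemma (e.g.\ \cite[Lemma~7.1]{giustiDirectMethodsCalculus2003}) one then obtains $Y_j\to 0$ provided $Y_0=\int_{B_{R_0}}(v-\kappa)_+^2$ lies below the threshold $C^{-1/\beta}b^{-1/\beta^2}R_0^{c/\beta}\kappa^{d/\beta}$, and this smallness is arranged, using $(v-\kappa)_+^2\le\kappa^{2-\alpha}v^\alpha$ on $\{v>\kappa\}$, by choosing $\kappa\sim\big(\fint_{B_{R_0}}v^\alpha+1\big)^{1/[\alpha(1-N/2+N/\alpha)]}$; tracking the volume factors through this choice gives the first term of the asserted bound, while the contribution of the level increments gives the second. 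All manipulations are legitimate because, by \cref{reglemma}, $u=u_n\in W^{1,\infty}(B)\cap W^{2,2}(B)$, so $v\in L^\infty(B_{R_0})$ and $(v-k)_+\eta_j^2\in W^{1,2}_0$; the final estimate is independent of $\ve$ and of this qualitative bound. \textbf{The main obstacle} is the mismatch between the exponent $\alpha$ appearing on the right-hand side of \cref{cacc3} and the exponent $2$ that is natural for both the Sobolev embedding and the Chebyshev measure estimate: converting $\int v^\alpha\chi_{\{v>k\}}$ into a quantity controlled by $Y_j$ with a genuine gain $\beta>0$ is possible precisely when $\alpha<2^\ast$, and it is this step that fixes the admissible range of $\alpha$ and hence, after the interpolation performed in \cref{section6}, the restriction \cref{restriction} on $q$. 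A secondary technical point is the careful calibration of the level increments so that the non-decaying forcing $L^2\int_{B_{\rho_j}}|f|^2$ is absorbed by the $k_j$'s rather than left on the right-hand side, which is what upgrades a bare $\|f\|_{L^2}$ into the scale-invariant potential $\mathbf{P}_1^f(x_0,2R_0)$.
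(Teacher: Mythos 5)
Your overall route is the paper's: a De Giorgi iteration on the superlevel sets of $v$ driven by \cref{cacc3}, Sobolev applied to $(v-k_{j+1})_+\eta_j^2$, Chebyshev on the level sets, the restriction $\alpha<\tfrac{2N}{N-2}$ entering through a H\"older step, and the potential $\mathbf{P}_1^f(x_0,2R_0)$ produced by summing $(r_{j-1}-r_j)\bigl(\fint_{B_{r_j}}f^2\,dx\bigr)^{1/2}$ over the dyadic radii. The one place where your plan has a genuine gap is the choice of iterated quantity. The paper iterates $V_j=\bigl(\fint_{B_{r_j}}(v-k_j)_+^{\alpha}\,dx\bigr)^{1/\alpha}$, so the term $\int v^{\alpha}\chi_{\{v>k_{j+1}\}}$ on the right of \cref{cacc3} is bounded directly by $C\,2^{\alpha(j+1)}\int(v-k_j)_+^{\alpha}$, and $\alpha<2N/(N-2)$ is used only to pass from the Sobolev exponent $2\theta$ back to $\alpha$ via H\"older plus Chebyshev with exponent $\alpha$. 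You instead iterate $Y_j=\int(v-k_j)_+^2$ and propose to control $\int(v-k_j)_+^{\alpha}$ by H\"older against $\lVert(v-k_j)_+\rVert_{L^{2^{\ast}}(B_{\rho_j})}^{\alpha}$. That norm is not a power of $Y_j$: it is the Sobolev output of stage $j-1$, living on the larger ball where the current cutoff is not identically one, so the asserted closed recursion $Y_{j+1}\le Cb^{j}R_0^{-c}\kappa^{-d}Y_j^{1+\beta}$ does not follow from the steps you describe, and "reabsorbing the superlinear occurrences of the $L^{2^{\ast}}$-norm" is exactly the missing argument. It can be repaired by tracking a two-term recursion in $(Y_j,\lVert(v-k_j)_+\rVert_{L^{2^\ast}})$, or, much more simply, by switching to the paper's $L^{\alpha}$ quantity.

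Two smaller points. First, as literally described your level increments consist only of the $f$-part $cL\bigl(\rho_j^2\fint_{B_{\rho_j}}f^2\,dx\bigr)^{1/2}$; the Chebyshev bounds $|\{v>k_{j+1}\}|\le(k_{j+1}-k_j)^{-2}Y_j$ and the $\kappa^{-d}$ prefactor both need a geometric part of order $\kappa 2^{-j}$ in $k_{j+1}-k_j$ (and must survive the case $f\equiv0$), so the levels have to combine both contributions. Second, the Kuusi--Mingione absorption of the forcing into the levels, which you flag as the delicate point, is not actually needed: in the paper the $f$-term enters the recursion as $L\,W_j$ with $W_j=(r_{j-1}-r_j)\bigl(\fint_{B_{r_j}}f^2\,dx\bigr)^{1/2}$, which is summable to $2^{N/2}\mathbf{P}_1^f(x_0,2R_0)$ precisely because of the factor $r_{j-1}-r_j$; one then simply carries $L\sum_jW_j$ additively through the unrolled recursion and folds it into the final choice of $k$, together with $V_0+1$. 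Both treatments yield the potential, but the paper's is less delicate and avoids calibrating increments against a non-decaying average.
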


\begin{proof}
	We start by defining a sequence of decreasing radii $r_j=\dfrac{R_0}{2}+\dfrac{R_0}{2^{j+1}}$, $j=0,1,2,\ldots$ so that $r_0=R_0$ and $r_\infty=\dfrac{R_0}{2}$. Also define $k_j=k-\dfrac{k}{2^{j}}$ so that $k_0=0$ and $k_\infty=k$. We define a sequence of test functions $\eta_j\in C_c^\infty(B_{r_j}(x_0))$ with the property that $0\leq\eta\leq 1$, $\eta=1$ on $B_{r_{j+1}}(x_0)$ and $|\nabla \eta_j|\leq\dfrac{C}{(r_j-r_{j+1})^2}$. With these choices, the improved Caccioppoli inequality from \cref{cacc3} becomes 
	\begin{align*}%\label{cacc5}
		\int_{B_{r_{j}}(x_0)}|\nabla( (v-k_{j+1})_{+}\eta_j^2)|^2\,dx\leq \lbr \frac{C}{(r_j-r_{j+1})^4} + \frac{C}{(r_j-r_{j+1})^2}\rbr&\left(\int_{B_{r_j}(x_0)} v^{\al} \chi_{\{v>k_{j+1}\}} +1\,dx\right)\\
		&+C L^2\int_{B_{r_j}(x_0)} f^2\,dx,
	\end{align*}
where we have taken the test function inside the gradient on the left hand side of \cref{cacc3} and made use of $\al \geq 2$.

	We transform the above equation by an application of Sobolev's inequality for $p=2$ on the LHS as follows: Let $\theta=\frac{N}{N-2}$, then we have
	\begin{equation}\label{cacc6}\begin{array}{rcl}
		\left(\fint_{B_{r_{j+1}}(x_0)} (v-k_{j+1})_{+}^{2\theta}\,dx\right)^{1/\theta} &\leq &\lbr  \frac{C\,r_j^2}{(r_j-r_{j+1})^4} + \frac{C\,r_j^2}{(r_j-r_{j+1})^2}\rbr \left(\fint_{B_{r_j}(x_0)} v^{\al} \chi_{\{v>k_{j+1}\}}\,dx+1\right)\\
		&& +C \,r_j^2 L^2\fint_{B_{r_j}(x_0)} f^2\,dx.
	\end{array}\end{equation}
	Now, notice that 
	\begin{align*}
		\frac{r_j^2}{(r_j-r_{j+1})^4}+ \frac{r_j^2}{(r_j-r_{j+1})^2}\leq C\,b^j
	\end{align*} for some $b>1$ and $C$ depending on $R_0$.
	Similarly
	\begin{align*}
		{r_j^2}= \frac{R_0^2}{4}\left(1+\frac{1}{2^{j}}\right)^2=\left(\frac{R_0}{2^{j+1}}\right)^2(2^j+1)^2\leq C\,b^j\,(r_{j-1}-r_j)^2
	\end{align*} for some $b>1$ and $C$ depending on $R_0$, since $(r_{j-1}-r_j)=\frac{R_0}{2^{j+1}}$.
	Substituting these estimations in~\cref{cacc6}, we get
	\begin{equation*}%\label{cacc7}
		\left(\fint_{B_{r_{j+1}}(x_0)} (v-k_{j+1})_{+}^{2\theta}\,dx\right)^{1/\theta}\leq C\,b^j\left\{\left(\fint_{B_{r_j}(x_0)} v^{\al} \chi_{\{v>k_{j+1}\}}\,dx+1\right)+ \,(r_{j-1}-r_j)^2 L^2\fint_{B_{r_j}(x_0)} f^2\,dx\right\}.
	\end{equation*} 
	We also have the following estimate:
	\begin{equation}
		\begin{array}{rcl}
		\int_{B_{r_j}}(v-k_j)_+^{\al}\,dx&\geq &\int_{B_{r_j}}(v-k_j)_+^{\al}\chi_{\{v>k_{j+1}\}}\,dx
		\geq \int_{B_{r_j}}v^{\al}\left(1-\frac{2^{j+1}-2}{2^{j+1}-1}\right)^{\al}\chi_{\{v>k_{j+1}\}}\,dx\\
		&\geq & \frac{C}{2^{{\al}(j+1)}}\int_{B_{r_j}}v^{\al}\chi_{\{v>k_{j+1}\}}\,dx, 
\end{array}	\end{equation}
	so that
	\begin{align}\label{cacc8}
		\left(\fint_{B_{r_{j+1}}(x_0)} (v-k_{j+1})_{+}^{2\theta}\,dx\right)^{1/\theta}\leq C\,b^j\left\{\left(\fint_{B_{r_j}(x_0)} (v-k_{j})_+^{\al}\,dx+1\right)+ \,(r_{j-1}-r_j)^2 L^2\fint_{B_{r_j}(x_0)} f^2\,dx\right\},
	\end{align}

	Now, define $V_j:=\left(\fint_{B_{r_j}(x_0)}(v-k_{j})^{\al}_+\,dx\right)^{1/\al}$ and $W_j:=(r_{j-1}-r_j)\left(\fint_{B_{r_j}(x_0)}f^2\,dx\right)^{1/2}$.

%\hrule
	We are now ready to write an iterative estimate for $V_j$. Observe that from the restriction $\al < \frac{2N}{N-2}$, we have
	\begin{align*}%\label{estjplus1}
		V_{j+1}=\left(\fint_{B_{r_{j+1}}(x_0)}(v-k_{j+1})^{\al}\right)^{1/\al}\leq C\left(\fint_{B_{r_{j+1}}(x_0)}(v-k_{j+1})^{2\theta}\right)^{1/2\theta}\left(\frac{|\{v>k_{j+1}\}|}{|B_{r_j}|}\right)^{(2\theta-\al)/2\tht\al}.
	\end{align*}
	In order to estimate the last term appearing on the right hand side, we have 
	\begin{align*}
		\int_{B_{r_j}}(v-k_j)_+^{\al}\chi_{\{v>j_{j+1}\}}\,dx\geq (k_{j+1}-k_j)^{\al}|\{v>k_{j+1}\}|=\frac{k^{\al}}{2^{{\al}(j+1)}}|\{v>k_{j+1}\}|
	\end{align*} so that
	\begin{align}\label{estjplus11}
		V_{j+1}=\left(\fint_{B_{r_{j+1}}(x_0)}(v-k_{j+1})^{\al}\right)^{1/{\al}}\leq C\left(\fint_{B_{r_{j+1}}(x_0)}(v-k_{j+1})^{2\theta}\right)^{1/2\theta}\left(\frac{{b}^{{\al}(j+1)}}{k^{\al}}V_j^{\al}\right)^{(2\theta-{\al})/2\tht{\al}}.
	\end{align}
%\hrule
	On substituting \cref{cacc8} in \cref{estjplus11}, we get 
	\begin{equation*}%\label{estjplus12}
		\begin{array}{rcl}
		V_{j+1}&\leq & C\frac{b^j}{k^{\frac{2\tht-\alpha}{2\tht}}}\left(V_j^{\frac{\alpha}{2}}+1+L\,W_j\right)\left(V_j\right)^{\frac{2\tht-\alpha}{2\tht}}\\
		&\leq & C\frac{b^j}{k^{\frac{2\tht-\alpha}{2\tht}}}\left(V_j+1+L\,W_j\right)^{\frac{\alpha}{2}+\frac{2\tht-\alpha}{2\tht}}
		= C\frac{b^j}{k^{\frac{2\tht-\alpha}{2\tht}}}\left(V_j+1+L\,W_j\right)^{1+\frac{\alpha}{N}}\\
		&\leq & C\frac{b^j}{k^{\frac{2\tht-\alpha}{2\tht}}}\left(C\frac{b^{j-1}}{k^{\frac{2\tht-\alpha}{2\tht}}}\left(V_{j-1}+1+L\,W_{j-1}\right)^{1+\alpha/N}+1+L\,W_j\right)^{1+\alpha/N}\\
		&\leq & \left(\frac{C}{k^{\frac{2\tht-\alpha}{2\tht}}}\right)^{1+(1+\alpha/N)}b^{j+(j-1)(1+\alpha/n)}\left(V_{j-1}+1+L(W_{j-1}+W_j)\right)^{(1+\alpha/N)^2}\\
		&\vdots &\\
		&\leq & \left(\frac{C}{k^{\frac{2\tht-\alpha}{2\tht}}}\right)^{\frac{(1+\alpha/N)^j-1}{\alpha/N}}b^{\frac{(1+\alpha/N)^{j+1}+1}{(\alpha/N)^2}}\left(V_0+1+L\sum_{k=0}^{j}W_{j-k}\right)^{(1+\alpha/N)^j}.
	\end{array}\end{equation*}
	A standard iteration lemma implies $V_\infty=0$ provided $V_0\leq \left(\frac{C}{k^{\frac{2\tht-\alpha}{2\tht}}}\right)^{-N/\alpha}\,b^{-N^2/\alpha^2}$ is satisfied. This requires
	\begin{align*}
		k^{1-\frac{N}{2}+\frac{N}{\alpha}}=C^{N/\alpha}b^{N^2/\alpha^2}\left(V_0+1+L\sum_{j=1}^\infty W_j\right), 
	\end{align*} assuming that $\sum_{j=1}^\infty W_j<\infty$.
	Notice that,
	\begin{equation*}\begin{array}{rcl}
		\sum_{j=1}^\infty W_j&=&\sum_{j=1}^\infty (r_{j-1}-r_j)\left(\fint_{B_{r_j}(x_0)}f^2\,dx\right)^{1/2}\\
		&=&\sum_{j=1}^\infty \int_{r_j}^{r_{j-1}}\rho\,d\rho\left(\fint_{B_{r_j}(x_0)}f^2\,dx\right)^{1/2}\\
		&\leq& 2^{N/2}\sum_{j=1}^\infty \int_{r_j}^{r_{j-1}}\left(\rho^2\fint_{B_{\rho}(x_0)}f^2\,dx\right)^{1/2}\frac{d\,\rho}{\rho}\\
		&\leq& 2^{N/2} \int_{R_0/2}^{3R_0/2}\left(\rho^2\fint_{B_{\rho}(x_0)}f^2\,dx\right)^{1/2}\frac{d\,\rho}{\rho}\\
		&\leq& 2^{N/2} \int_{0}^{2R_0}\left(\rho^2\fint_{B_{\rho}(x_0)}f^2\,dx\right)^{1/2}\frac{d\,\rho}{\rho}=2^{N/2}\mathbf{P}_1^f(x_0,2R_0).
	\end{array}\end{equation*}
	Since $v\leq k$ in $B_{R_0/2}(x_0)$, we get the result.
\end{proof}

\section{Interpolation estimates}\label{section6}

In this section, we will prove a Lipschitz bound for the regularized minimizers where the right hand side is in terms of $L^p$ norm of $\nabla u$. This is achieved by a standard interpolation.

\begin{proposition}\label{lipboundtwo}
	Let $u_n$ be the solution to~\cref{regminim}. Let $B_R\Subset B$ be a ball and denote $f_{B_R}=f\mathbbm{1}_{B_R}$ so that $f_{B_R}\in L^2(\mathbb{R}^N)$. Further suppose that 
	\begin{align}\label{restrict3}
		\frac{2(q-1)}{p}\leq \alpha <2+\min\left\{\frac{2}{N},\frac{4(p-1)}{Np+2-2p}\right\}.
	\end{align}
	Let $F_n$ be a sequence of regularized integrands as in~\cref{mainprob2}. Then it holds that
	\begin{align}\label{cacc9}
		||\nabla u_n||^p_{L^\infty(B_{R/2})}\leq C\left(\fint_{B_{R}} F_n(\nabla u_n) \,dx\right)^{\frac{2}{2\alpha-\alpha N+2N}}+C+C\left( ||\mathbf{P}_1^{f_{B_R}}(\cdot,2R_0)||_{L^\infty(B_R)}\right)^{\frac{2p\alpha}{p(2\alpha-\alpha N+2N)-2\alpha}},
			\end{align} for a constant $C$ depending on $R_0$, $N$, $p$, $||U||_{L^\infty(2B)},||f||_{L^N(B)}$, $M$ and $m$ but independent of $\ve$.
\end{proposition}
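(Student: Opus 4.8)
The plan is to turn the De~Giorgi bound of \cref{lipboundone}, whose right‑hand side still involves the \emph{a priori} Lipschitz constant $L$, into a bound independent of $L$: localize it on a one-parameter family of concentric balls, interpolate the $L^\alpha$-integral of $v$ that it produces against the energy $\fint F_n(\nabla u_n)$, and then absorb the Lipschitz norm by the iteration lemma \cref{iterlemma}. Put $v:=(\ve^2+|\nabla u_n|^2)^{p/2}$ and $M(t):=\|v\|_{L^\infty(B_t)}$ for a radius $t$; by \cref{reglemma} every $M(t)$ is finite, and $\|\nabla u_n\|_{L^\infty(B_t)}^p\le M(t)\le C\big(\|\nabla u_n\|_{L^\infty(B_t)}^p+1\big)$ since $\ve\le\ve_0\le1$. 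We apply \cref{lipboundone} with the exponent $\alpha$ of \cref{restrict3}: this is legitimate because $\alpha\ge\max\{2,\tfrac{2(q-1)}{p}\}$ (the second inequality being needed for the Caccioppoli estimate \cref{caccioppoli3} behind \cref{lipboundone}, the first for \cref{lipboundone} itself), while the constraint $\alpha<\tfrac{2N}{N-2}$ is automatic from \cref{restrict3}. Abbreviate the two exponents occurring in \cref{lipboundone} by $\gamma_1:=\tfrac{2}{2\alpha-\alpha N+2N}$ and $\gamma_2:=\alpha\gamma_1=\tfrac{2\alpha}{2\alpha-\alpha N+2N}$; note that $\alpha\ge2$ forces $\gamma_2\ge1$.

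\textbf{Localization.} Let $R/2\le\sigma_1<\sigma_2<R$ vary over a subinterval of $[R/2,R]$ chosen short enough that $\sigma_2-\sigma_1\le R_0$ and $B_{2(\sigma_2-\sigma_1)}(x_0)\subset B_R$ for all $x_0\in B_{\sigma_1}$; the latter is what lets one pass from $f_n$ (which has $|f_n|\le|f|$) to $f_{B_R}=f\mathbbm{1}_{B_R}$ inside the potential. For each $x_0\in B_{\sigma_1}$ apply \cref{lipboundone} on $B_{\sigma_2-\sigma_1}(x_0)\Subset B_{\sigma_2}$ and take the supremum over $x_0\in B_{\sigma_1}$, the balls $B_{(\sigma_2-\sigma_1)/2}(x_0)$ covering $B_{\sigma_1}$. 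Bounding $\fint_{B_{\sigma_2-\sigma_1}(x_0)}v^\alpha\le C(\sigma_2-\sigma_1)^{-N}\fint_{B_{\sigma_2}}v^\alpha$, $\|\nabla u_n\|_{L^\infty(B_{\sigma_2-\sigma_1}(x_0))}\le M(\sigma_2)^{1/p}$, and $\mathbf{P}_1^{f_n}(x_0,2(\sigma_2-\sigma_1))\le\mathbf{P}:=\|\mathbf{P}_1^{f_{B_R}}(\cdot,2R_0)\|_{L^\infty(B_R)}$ uniformly in $x_0$, and absorbing the dependence on $\sigma_2-\sigma_1$ of the constant of \cref{lipboundone} into negative powers of $\sigma_2-\sigma_1$, one obtains, for suitable $a_1,a_2>0$,
\[
M(\sigma_1)\ \le\ \frac{C}{(\sigma_2-\sigma_1)^{a_1}}\Big(\fint_{B_{\sigma_2}}v^\alpha+1\Big)^{\gamma_1}\ +\ \frac{C}{(\sigma_2-\sigma_1)^{a_2}}\,M(\sigma_2)^{1/p}\,\mathbf{P}^{\gamma_2}.
\]

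\textbf{Interpolation, absorption, iteration.} Since $\alpha\ge1$, $\fint_{B_{\sigma_2}}v^\alpha\le M(\sigma_2)^{\alpha-1}\fint_{B_{\sigma_2}}v$, and $\fint_{B_{\sigma_2}}v\le C\fint_{B_R}F_n(\nabla u_n)$ by the lower bound $m_0(\ve^2+|z|^2)^{p/2}\le F_n(z)$ in \cref{rhyp3} together with $B_{\sigma_2}\subset B_R$ and $|B_{\sigma_2}|\ge|B_{R/2}|$. Insert this, write $M(\sigma_2)^{1/p}\mathbf{P}^{\gamma_2}\le\mathbf{P}^{\gamma_2}+M(\sigma_2)^{\gamma_2/p}\mathbf{P}^{\gamma_2}$ (using $\gamma_2\ge1$), and apply Young's inequality to $M(\sigma_2)^{(\alpha-1)\gamma_1}\big(\fint_{B_R}F_n\big)^{\gamma_1}$ and to $M(\sigma_2)^{\gamma_2/p}\mathbf{P}^{\gamma_2}$ so as to split off $M(\sigma_2)$ with coefficient $\tfrac12$. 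This is exactly where the two halves of \cref{restrict3} are consumed: the conjugate Young exponents $\tfrac1{1-(\alpha-1)\gamma_1}$ and $\tfrac p{p-\gamma_2}$ are finite precisely because $\alpha<2+\tfrac2N\Longleftrightarrow(\alpha-1)\gamma_1<1$ and $\alpha<2+\tfrac{4(p-1)}{Np+2-2p}\Longleftrightarrow\gamma_2<p$. The resulting inequality has the form $M(\sigma_1)\le\tfrac12 M(\sigma_2)+C(\sigma_2-\sigma_1)^{-b}\big[\big(\fint_{B_R}F_n\big)^{\gamma_1/(1-(\alpha-1)\gamma_1)}+\mathbf{P}^{p\gamma_2/(p-\gamma_2)}+1\big]$, the two exponents being exactly the ones in \cref{cacc9} and the spare $\mathbf{P}^{\gamma_2}$ being absorbed since $\gamma_2\le p\gamma_2/(p-\gamma_2)$. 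Finally \cref{iterlemma} with $\vartheta=\tfrac12$, applied on the chosen subinterval of $[R/2,R]$ where $M$ is bounded, removes $M(\sigma_2)$ and gives $M(R/2)\le C\big[(\fint_{B_R}F_n)^{\gamma_1/(1-(\alpha-1)\gamma_1)}+\mathbf{P}^{p\gamma_2/(p-\gamma_2)}+1\big]$ with the stated dependence of $C$; since $\|\nabla u_n\|_{L^\infty(B_{R/2})}^p\le M(R/2)$, this is \cref{cacc9}.

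\textbf{Where the difficulty lies.} The substantive point is making the localization step close up: \cref{lipboundone} must be invoked with the \emph{variable} radius $\sigma_2-\sigma_1$ --- so that the Lipschitz norm on its right‑hand side is taken over a ball only marginally larger than the one it controls, which is what brings \cref{iterlemma} into play --- and one must then keep track of how its constant degenerates as $\sigma_2-\sigma_1\downarrow0$ and ensure the potential term stays uniformly controlled by $\|\mathbf{P}_1^{f_{B_R}}(\cdot,2R_0)\|_{L^\infty(B_R)}$ over all centers $x_0$. The remaining, purely arithmetic, ingredient is the observation that the two independent solvability conditions for the two applications of Young's inequality, $(\alpha-1)\gamma_1<1$ and $\gamma_2<p$, are exactly the two bounds packaged in $\alpha<2+\min\{\tfrac2N,\tfrac{4(p-1)}{Np+2-2p}\}$, which is the source of the restriction \cref{restrict3}.
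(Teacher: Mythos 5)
Your proposal is correct and follows essentially the same route as the paper: apply \cref{lipboundone} on a family of concentric balls with variable radius $\sigma_2-\sigma_1$, interpolate $\fint v^\alpha\le\|v\|_{L^\infty}^{\alpha-1}\fint v$, bound $\fint v$ by $\fint F_n(\nabla u_n)$ via the lower bound in \cref{rhyp3}, split off $\frac12\|v\|_{L^\infty}$ by Young's inequality (the two solvability constraints reproducing exactly the two halves of \cref{restrict3}), and finish with \cref{iterlemma}.

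One small divergence worth noting. You take the statement of \cref{lipboundone} at face value, i.e.\ with the Lipschitz norm appearing to the first power $L\cdot\mathbf{P}^{\gamma_2}$, and then upgrade $M(\sigma_2)^{1/p}$ to $1+M(\sigma_2)^{\gamma_2/p}$ via $\gamma_2\ge1$ before applying Young. The paper's own proof of this proposition instead writes $\|\nabla u\|_{L^\infty(B_t)}^{\gamma_2}\,\mathbf{P}^{\gamma_2}$ directly, which is what the De Giorgi iteration in the proof of \cref{lipboundone} actually delivers (the stated $L^1$ there appears to be a slip, since $k$ is obtained by raising $V_0+1+L\sum W_j$ to the power $\gamma_2$). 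Both readings lead, after Young with exponent pair $\big(\tfrac{p}{\gamma_2},\tfrac{p}{p-\gamma_2}\big)$, to the same potential term $\mathbf{P}^{\frac{2p\alpha}{p(2\alpha-\alpha N+2N)-2\alpha}}$, so your detour costs nothing but is in fact the more conservative reading of the lemma as written. You also make explicit the constraint $\alpha\ge\max\{2,\tfrac{2(q-1)}{p}\}$ needed to invoke \cref{lipboundone}, which the paper leaves implicit; this is a useful clarification since \cref{restrict3} alone does not force $\alpha\ge2$ when $q<p+1$.

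A minor bookkeeping remark: the exponent on the energy term that your argument (and the paper's proof) actually produces after the first Young inequality is $\gamma_1/(1-(\alpha-1)\gamma_1)=\tfrac{2}{2+2N-\alpha N}$, which matches the main theorem but differs from the exponent $\tfrac{2}{2\alpha-\alpha N+2N}$ printed in \cref{cacc9}; the latter appears to be a typo in the statement, and your derivation gets the correct one.
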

\begin{proof}
	Consider concentric balls $B_{R/2}\Subset B_s\Subset B_t\Subset B_R$, a point $x_0\in B_s$ and $R_0=t-s$ such that $B_{R_0}(x_0)\subset B_t$, then we have $||\nabla u||_{L^\infty(B_{R_0}(x_0))}\leq ||\nabla u||_{L^\infty(B_t)}=L$, so that
	\begin{align*}
		||v||_{L^\infty(B_s)}&\leq C\left(\fint_{B_{R_0}(x_0)} v^\alpha \,dx\right)^{\frac{1}{\alpha\left(1-\frac N2 +\frac{N}{\alpha}\right)}}+C ||\nabla u||_{L^\infty(B_t)}^{\frac{1}{\left(1-\frac N2 +\frac{N}{\alpha}\right)}}\,\mathbf{P}_1^{f_{B_R}}(x_0,2R_0)^{\frac{1}{\left(1-\frac N2 +\frac{N}{\alpha}\right)}}+C\\
		&\leq C\left(\fint_{B_{R_0}(x_0)} v^\alpha \,dx\right)^{\frac{1}{\alpha\left(1-\frac N2 +\frac{N}{\alpha}\right)}}+C ||\nabla u||_{L^\infty(B_t)}^{\frac{1}{\left(1-\frac N2 +\frac{N}{\alpha}\right)}}\,||\mathbf{P}_1^{f_{B_R}}(\cdot,2(t-s))||_{L^\infty(B_t)}^{\frac{1}{\left(1-\frac N2 +\frac{N}{\alpha}\right)}}+C\\
		&\leq C\left(\fint_{B_t} v^\alpha \,dx\right)^{\frac{1}{\alpha\left(1-\frac N2 +\frac{N}{\alpha}\right)}}+C ||(\ve_n+|\nabla u|^2)^{1/2}||_{L^\infty(B_t)}^{\frac{1}{\left(1-\frac N2 +\frac{N}{\alpha}\right)}}\,||\mathbf{P}_1^{f_{B_R}}(\cdot,2(t-s))||_{L^\infty(B_t)}^{\frac{1}{\left(1-\frac N2 +\frac{N}{\alpha}\right)}}+C\\
		&\leq C||v||^{\frac{\alpha-1}{\alpha\left(1-\frac N2 +\frac{N}{\alpha}\right)}}_{L^\infty(B_t)}\left(\fint_{B_t} v \,dx\right)^{\frac{1}{\alpha\left(1-\frac N2 +\frac{N}{\alpha}\right)}}+C ||v||_{L^\infty(B_t)}^{\frac{1}{p\left(1-\frac N2 +\frac{N}{\alpha}\right)}}\,||\mathbf{P}_1^{f_{B_R}}(\cdot,2(t-s))||_{L^\infty(B_t)}^{\frac{1}{\left(1-\frac N2 +\frac{N}{\alpha}\right)}}+C\\
		&\leq \frac{1}{2}||v||_{L^\infty(B_t)}+\frac{C}{(t-s)^{\frac{2N}{2+2N-\alpha N}}}\left(\int_{B_t} v \,dx\right)^{\frac{2}{2+2N-\alpha N}}+C ||\mathbf{P}_1^{f_{B_R}}(\cdot,R)||_{L^\infty(B_R)}^{\frac{2p\alpha}{p(2\alpha-\alpha N+2N)-2\alpha}}+C,
	\end{align*} where to obtain the last inequality, we have used Young's inequality along with the restrictions in \cref{restrict3}. Now, the result follows from an application of \cref{iterlemma}.
\end{proof}

\section{Passage to limit in the regularization process}\label{section7}

In this section, we will pass to the limit in \cref{cacc9} as $n\to\infty$.  
\begin{proof}[Proof of \cref{maintheorem}]
	We begin with the following estimates where $\gamma*=\dfrac{Np}{N-p}$ for $p<N$ and $\gamma*=\dfrac{2N}{N-1}$ otherwise. 
\begin{align*}
	\fint_B|f_n(u_n-U_n)|\,dx&\leq \left(\fint |f_n|^N\,dx\right)^{1/N}\left(\fint|u_n-U_n|^{\frac{N}{N-1}}\right)^{\frac{N-1}{N}}\nonumber\\
	&\leq \left(\fint |f_n|^N\,dx\right)^{1/N}\left(\fint|u_n-U_n|^{\gamma*}\right)^{1/\gamma*}\nonumber\\
	&\leq C\left(\fint |f_n|^N\,dx\right)^{1/N}\left(\fint|\nabla u_n-\nabla U_n|^{p}\right)^{1/p}\nonumber\\
	&\leq C\left(\fint |f_n|^N\,dx\right)^{1/N}\left(\fint_B F_n(\nabla u_n)\,dx+\fint_B F_n(\nabla U_n)\,dx\right)^{1/p}\nonumber\\
	&\leq C||f_n||^{\frac{p}{p-1}}_{L^N(B)}+\left(\fint_B F_n(\nabla u_n)\,dx+\frac{|B+\ve_n B_1|}{|B|}\fint_{B+\ve_n B_1} F_n(\nabla U)\,dx\right)^{1/p}\nonumber\\
	&\leq \frac{1}{2}\fint_B F_n(\nabla u_n)\,dx+\frac{|B+\ve_n B_1|}{2|B|}\fint_{B+\ve_n B_1} F_n(\nabla U)\,dx+C||f_n||^{\frac{p}{p-1}}_{L^N(B)},
\end{align*}
where the second inequality is due to H\"older's inequality, the third inequality is by Sobolev's inequality, the fifth inequality is by Jensen's inequality and the sixth inequality is by Young's inequality.

On the other hand, by minimality of $u_n$, we have
\begin{align*}
	\fint_B F_n(\nabla u_n)\,dx&\leq \fint_B F_n(\nabla U_n)+\fint_B f_n(u_n-U_N)\,dx\nonumber\\
	&\leq \frac{|B+\ve_n B_1|}{|B|}\fint_{B+\ve_n B_1} F_n(\nabla U)\,dx+\fint_B f_n(u_n-U_N)\,dx\nonumber\\
	&\leq \frac{1}{2}\fint_B F_n(\nabla u_n)\,dx+C\fint_{B+\ve_n B_1} F_n(\nabla U)\,dx+C||f_n||^{\frac{p}{p-1}}_{L^N(B)}.
\end{align*}
After absorbing the first term to the left, we get,
\begin{align}\label{est11}
	\fint_B F_n(\nabla u_n)\,dx\leq C\fint_{B+\ve_n B_1} F_n(\nabla U)\,dx+C||f_n||^{\frac{p}{p-1}}_{L^N(B)}.
\end{align}
Combining \cref{est11} and \cref{cacc9}, we get
\begin{align*}
%	\label{cacc10}
	||\nabla u||^p_{L^\infty(B_{R/2})}\leq & C\left\{\frac{|B|}{|B_R|}\left(\fint_{B+\ve_n B_1} F_n(\nabla U) \,dx+||f_n||^{\frac{p}{p-1}}_{L^N(B)}\right)\right\}^{\frac{2}{2+2N-\alpha N}}\\
	&\qquad+C+ C||\mathbf{P}_1^{f_{B_R}}(\cdot,2R_0)||_{L^\infty(B_R)}^{\frac{2p\alpha}{p(2\alpha-\alpha N+2N)-2\alpha}},
\end{align*} where the constant only depends on $R_0$, $N$, $p$, $||U||_{L^\infty(2B)},||f||_{L^N(B)}$, $M$ and $m$.
However, by definition of $F_n$, we have 
\begin{align}\label{est12}
	F_n(z)\lesssim F(z)+\ve_n^p,
\end{align}
where the constant does not depend on $n$. Therefore, we can write
\begin{align*}
%	\label{cacc11}
||\nabla u||^p_{L^\infty(B_{R/2})}\leq & C\left\{\frac{|B|}{|B_R|}\left(\fint_{B+\ve_n B_1} F(\nabla U) \,dx+1+||f||^{\frac{p}{p-1}}_{L^N(B)}\right)\right\}^{\frac{2}{2+2N-\alpha N}}\\
&\qquad+C+ C||\mathbf{P}_1^{f_{B_R}}(\cdot,2R_0)||_{L^\infty(B_R)}^{\frac{2p\alpha}{p(2\alpha-\alpha N+2N)-2\alpha}},
\end{align*}
Also,
\begin{align*}
	||\mathbf{P}_1^{f_{B_R}}(\cdot,R)||_{L^\infty(B_R)}\leq C||f_{B_R}||_{L(N,1)(B_{2R})}=C||f||_{L(N,1)(B_{R})},
\end{align*}
therefore,
\begin{align}
	\label{cacc12}
	||\nabla u||^p_{L^\infty(B_{R/2})}\leq & C\left\{\frac{|B|}{|B_R|}\left(\fint_{B+\ve_n B_1} F(\nabla U) \,dx+||f||^{\frac{p}{p-1}}_{L^N(B)}\right)\right\}^{\frac{2}{2+2N-\alpha N}}\nonumber\\
	&\qquad+C+ C||f||_{L(N,1)(B_R)}^{\frac{2p\alpha}{p(2\alpha-\alpha N+2N)-2\alpha}},
\end{align}
Since $||F_n(\nabla U)||_{L^1(B+\ve_n B_1)}$ is bounded by \cref{est12}, therefore so is $||F_n(\nabla u_n)||_{L^1(B)}$ by \cref{est11}. Therefore, $u_n$ is bounded in $W^{1,p}(B)$. From \cref{cacc12}, $u_n$ is bounded in $W^{1,\infty}(B_{R/2})$. As a result, there exists $u_0\in U+W_0^{1,p}(B)$ such that
\begin{align*}
	u_n\rightharpoonup u_0 \mbox{ in }W^{1,p}(B)\mbox{-weak}\\
	u_n\to u_0 \mbox{ in }L^{\frac{N}{N-1}}(B)\\
	u_n\rightharpoonup u_0 \mbox{ in }W^{1,p}(B_{R/2})-\mbox{weak-}*.
\end{align*}
By lower-semicontinuity of norm, we have 
\begin{align*}
%	\label{bound}
	||\nabla u_0||^p_{L^\infty(B_{R/2})}\leq\liminf_{n\to\infty}||\nabla u_n||^p_{L^\infty(B_{R/2})}\leq C&\left\{\frac{|B|}{|B_R|}\left(\fint_{B} F(\nabla U) \,dx+||f||^{\frac{p}{p-1}}_{L^N(B)}\right)\right\}^{\frac{2}{2+2N-\alpha N}}\\
	&\qquad+C+ C||f||_{L(N,1)(B_R)}^{\frac{2p\alpha}{p(2\alpha-\alpha N+2N)-2\alpha}},
\end{align*}
It remains to show that $u_0=U$. We will show that $u_0$ is also a solution to \cref{mainprob}.
Observe that, by lower semicontinuity of norms,
\begin{align*}
	\int_{B} F(\nabla u_0)\,dx\leq \liminf_{n\to\infty}\int_B F(\nabla u_n)\,dx=\liminf_{n\to\infty}\int_B F_n(\nabla u_n)\,dx,
\end{align*}
since 
\[\lim_{n\to\infty}\int_B F(\nabla u_n)\,dx=\lim_{n\to\infty}\int_B F_n(\nabla u_n)\,dx\] due to the fact that $F_n$ converges to $F$ on compact sets and $\nabla u_n$ is bounded independent of $n$ by \cref{cacc9}.
 
Hence, by minimality of $u_n$ for $\mathfrak{F}_n$, the strong convergence of $U_n\to U$, and that of $u_n\to u_0$ in $L^{\frac{N}{N-1}}$, and the strong convergence of $f_n\to f$ in $L^N$, we have
\begin{align*}
	\mathfrak{F}(u_0,B)\leq \liminf_{n\to\infty}\mathfrak{F}_n(u_n,B)\leq \liminf_{n\to\infty}\mathfrak{F}_n(U_n,B)\leq \lim_{n\to\infty}\int_{B+\ve_n B_1}F_n(\nabla U)-\lim_{n\to\infty}f_nU_n\,dx=\mathfrak{F}(\nabla U,B).
\end{align*}
By strong convexity of $\mathfrak{F}$, we have $u_0=U$.
\end{proof}

%\section*{References}


\begin{thebibliography}{10}
	
	\bibitem{beckLipschitzBoundsNonuniform2020}
	Lisa Beck and Giuseppe Mingione.
	\newblock Lipschitz {{Bounds}} and {{Nonuniform Ellipticity}}.
	\newblock {\em Communications on Pure and Applied Mathematics},
	73(5):944--1034, 2020.

	\bibitem{bellaLipschitzBounds2022}
Peter Bella and Mathias Sch\"affner.
\newblock Lipschitz bounds for integral functionals with $(p,q)$-growth conditions.
\newblock {\em arXiv:2202.12999 [math]}, February 2022.

	\bibitem{bellaRegularityMinimizersScalar2020}
Peter Bella and Mathias Sch\"affner.
\newblock On the regularity of minimizers for scalar integral functionals with
  $(p,q)$-growth.
\newblock {\em Analysis \& PDE}, 13(7):2241–2257, 2020.
	
	\bibitem{bildhauerInteriorRegularityFree2004}
	M.~Bildhauer and M.~Fuchs.
	\newblock Interior {{Regularity}} for {{Free}} and {{Constrained Local
			Minimizers}} of {{Variational Integrals Under General Growth}} and
	{{Ellipticity Conditions}}.
	\newblock {\em Journal of Mathematical Sciences}, 123(6):4565--4576, October
	2004.

	
	
	\bibitem{bousquetGlobalLipschitzContinuity2016}
	Pierre Bousquet and Lorenzo Brasco.
	\newblock Global {{Lipschitz}} continuity for minima of degenerate problems.
	\newblock {\em Mathematische Annalen}, 366(3):1403--1450, December 2016.
	
	\bibitem{bousquetLipschitzRegularityOrthotropic2020}
	Pierre Bousquet and Lorenzo Brasco.
	\newblock Lipschitz regularity for orthotropic functionals with nonstandard
	growth conditions.
	\newblock {\em Revista Matem\'atica Iberoamericana}, 36(7):1989--2032, 2020.

	\bibitem{choeInteriorBehaviourMinimizers1992}
Hi~Jun Choe.
\newblock Interior behaviour of minimizers for certain functionals with
  nonstandard growth.
\newblock {\em Nonlinear Analysis: Theory, Methods \& Applications},
  19(10):933--945, November 1992.


  \bibitem{cupiniExistenceRegularityElliptic2014}
  Giovanni Cupini, Paolo Marcellini, and Elvira Mascolo.
  \newblock Existence and regularity for elliptic equations under $p,q$-growth.
  \newblock {\em Advances in Differential Equations}, 19(7-8):693–724, 2014.
  
	
	\bibitem{dibenedettoRegularitySolutionsNonlinear1984}
	Emmanuele DiBenedetto and Avner Friedman.
	\newblock Regularity of solutions of nonlinear degenerate parabolic systems.
	\newblock {\em Journal f\"ur die Reine und Angewandte Mathematik. [Crelle's
		Journal]}, 349:83--128, 1984.
	
	\bibitem{espositoRegularityResultsMinimizers2002}
	Luca Esposito, Francesco Leonetti, and Giuseppe Mingione.
	\newblock Regularity results for minimizers of irregular integrals with $(p,q)$
	growth.
	\newblock {\em Forum Mathematicum}, 14(2):245–272, 2002.

	\bibitem{giaquintaGrowthConditionsRegularity1987}
Mariano Giaquinta.
\newblock Growth conditions and regularity, a counterexample.
\newblock {\em manuscripta mathematica}, 59(2):245--248, June 1987.
	
	\bibitem{giustiDirectMethodsCalculus2003}
	Enrico Giusti.
	\newblock {\em Direct Methods in the Calculus of Variations}.
	\newblock {World Scientific Publishing Co., Inc., River Edge, NJ}, 2003.
	
	\bibitem{hirschGrowthConditionsRegularity2020}
	Jonas Hirsch and Mathias Sch{\"a}ffner.
	\newblock Growth conditions and regularity, an optimal local boundedness
	result.
	\newblock {\em Communications in Contemporary Mathematics}, page 2050029, June
	2020.

	\bibitem{hongRemarksMinimizersVariational1992}
Min~Chun Hong.
\newblock Some remarks on the minimizers of variational integrals with
  nonstandard growth conditions.
\newblock {\em Unione Matematica Italiana. Bollettino. A. Serie VII},
  6(1):91--101, 1992.
	
	\bibitem{kolodiuiBoundednessGeneralizedSolutions1971}
	\=I.~M. Kolod\=i\u i.
	\newblock The boundedness of generalized solutions of parabolic differential
	equations.
	\newblock {\em Vestnik Moskovskogo Universiteta. Serija I. Matematika,
		Mehanika}, 26(5):25–31, 1971.
	
	\bibitem{kolodiuiEstimateMaximumModulus1974}
	\=I.~M. Kolod\=i\u i.
	\newblock An estimate of the maximum modulus of the generalized solutions of
	elliptic differential equations with degeneracy.
	\newblock {\em Vīsnik L\'vīvs\'kogo Ordena Lenīna Deržavogo Unīversitetu
		im. Ivana Franka. Serīja Mehanīko-Matematična}, (9):27–32, 132, 1974.
	
	\bibitem{kuusiPotentialEstimatesGradient2012}
	Tuomo Kuusi and Giuseppe Mingione.
	\newblock Potential estimates and gradient boundedness for nonlinear parabolic
	systems.
	\newblock {\em Revista Matem\'atica Iberoamericana}, 28(2):535--576, April
	2012.

\bibitem{marcelliniExempleSolutionDiscontinue1987}
Paolo Marcellini.
\newblock Un exemple de solution discontinue d'un probleme variationnel dans le
  cas scalaire.
\newblock {\em Mat. Univ. Firenze.}, Preprint No. 11 dell'Ist, 1987.
	
	\bibitem{marcelliniRegularityMinimizersIntegrals1989}
	Paolo Marcellini.
	\newblock Regularity of minimizers of integrals of the calculus of variations
	with nonstandard growth conditions.
	\newblock {\em Archive for Rational Mechanics and Analysis}, 105(3):267--284,
	1989.
	
	\bibitem{marcelliniRegularityExistenceSolutions1991}
	Paolo Marcellini.
	\newblock Regularity and existence of solutions of elliptic equations with
	$p,q$-growth conditions.
	\newblock {\em Journal of Differential Equations}, 90(1):1–30, 1991.

	\bibitem{marcelliniRegularityGeneralGrowth2020}
Paolo Marcellini.
\newblock Regularity under general and $p,q$-growth conditions.
\newblock {\em Discrete and Continuous Dynamical Systems. Series S},
  13(7):2009–2031, 2020.

\bibitem{mingioneRecentDevelopmentsProblems2021}
Giuseppe Mingione and Vicen{\c t}iu R{\u a}dulescu.
\newblock Recent developments in problems with nonstandard growth and
  nonuniform ellipticity.
\newblock {\em Journal of Mathematical Analysis and Applications}, page 125197,
  March 2021.
	
	\bibitem{mirandaTeoremaDiEsistenza1965}
	Mario Miranda.
	\newblock Un teorema di esistenza e unicità per il problema dell'area minima
	in $n$ variabili.
	\newblock {\em Annali della Scuola Normale Superiore di Pisa. Classe di
		Scienze. Serie III}, 19:233–249, 1965.
	
	\bibitem{rindlerCalculusVariations2018}
	Filip Rindler.
	\newblock {\em Calculus of Variations}.
	\newblock Universitext. {Springer, Cham}, 2018.
	
	\bibitem{uraltsevaBoundednessGradientsGeneralized1983}
	N.~N. Ural'tseva and A.~B. Urdaletova.
	\newblock Boundedness of gradients of generalized solutions of degenerate
	nonuniformly elliptic quasilinear equations.
	\newblock {\em Vestnik Leningradskogo Universiteta. Matematika, Mekhanika,
		Astronomiya}, (vyp. 4):50--56, 1983.
	
	\bibitem{wuEllipticParabolicEquations2006}
	Zhuoqun Wu, Jingxue Yin, and Chunpeng Wang.
	\newblock {\em Elliptic \& Parabolic Equations}.
	\newblock {World Scientific Publishing Co. Pte. Ltd., Hackensack, NJ}, 2006.
	
\end{thebibliography}
\end{document}